\newtheorem{theorem}{Theorem}[section]
\newtheorem{lemma}[theorem]{Lemma}
\newtheorem{proposition}[theorem]{Proposition}
\newtheorem{corollary}[theorem]{Corollary}
\newtheorem{definition}[theorem]{Definition}
\newtheorem{remark}[theorem]{Remark}
\begin{document}
	
\title[Jordan Decomposition for WBV-functions in Ordered Normed Spaces]{Jordan Decomposition for WBV-functions in Ordered Normed Spaces}
\author{Amit Kumar}
	
\address{Department of Mathematical and Computational Sciences, National Institute of Technology Karnataka (NITK), Surathkal, Mangaluru- 575025, India.}

\email{\textcolor[rgb]{0.00,0.00,0.84}{amit231291280895@gmail.com} and \textcolor[rgb]{0.00,0.00,0.84}{amit@nitk.edu.in}}

\subjclass[2010]{Primary 46B40; Secondary 46L05, 46L30.}
	
\keywords{Absolutely ordered space, vector lattice, dedekind complete property, ordered normed spaces, functions of bounded variation, functions of weakly bounded variation, Jordan decomposition of functions of bounded variation.}

\begin{abstract}
In this paper, we define two relations one by orthogonality in vector lattices named as strong relation and the other by bounded linear functionals in normed spaces named as weak relation. It turns out that strong relation is an equivalence relation. We study some of the characterizations of these relations. Given a non-zero element in a normed space, we construct an extensible cone which makes that normed space, an ordered normed space. This extensible cone induces the weak relation in the normed space. Later, we prove a Jordan Decomposition Theorem in a normed space by the weak relation induced by the extensible cone.
\end{abstract}

\thanks{The author is an Assistant Professor in the Department of Mathematical and Computational Sciences, National Institute of Technology Karnataka (NITK), an Institute of National Importance under Ministry of Education, Government of India, Surathkal, Mangaluru- 575025, India.}

\maketitle

\section{Introduction}
In Mathematical Analysis, theory of functions of bounded variation is one of the fascinating theories. In short, functions of bounded variations are known as BV-functions. The major importance of BV-functions is found in Mathematics, Physics and Engineering in defining generalized solutions of of non-linear problems involving functionals, ordinary and partial differential equations. It is more general class than the class of Riemann integrable functions. If $[a,b]$ denotes a closed interval in the real number system $\mathbb{R},$  then the class of BV-functions $BV([a,b])$ is properly contained in the class of Riemann integrable functions $R([a,b]).$ BV-functions are frequently used in Complex Integration Theory. In Complex Integration Theory, BV-functions are known as rectifiable curves. If $f:[a,b]\to \mathbb{C}$ denotes a continuously differentiable function, then $f$ is a BV-function and the total variation of $f$ (also called length of $f$ over $[a,b]$) is given by the formula $V(f)=\displaystyle\int_{a}^{b}\vert f'(t)\vert dt.$ Camille Jordan was the first person who initated the theory of BV-functions. He studied BV-functions of a single variable in $1881$ for the purpose of dealing with the convergence in fourier series \cite{CJFS}. Then onwards, the theory of BV-functions has also been defined and studied in several variables by Lamberto Cesari in 1936 \cite{LSSF}. The notion of functions of bounded variation valued in a normed space is also well-defined  \cite{VS17}. Moreover, there is well-defined notion of a weaker class of functions valued in normed spaces than the class of functions of bounded variation named as functions of weakly bounded variation (see \cite{VS17}, Definition 1.4). In short, functions of weakly bounded variation are called WBV-functions. In this manuscript, WBV-functions are our main interest around which our whole manuscript revolves. However, knowledge of BV-functions is essential for WBV-functions as these functions are defined in the terms of BV-functions.

We would like to highlight the main reason that make BV-functions possible to be defined in real number system. It turns out that the main reason is order completeness property of real number system. In fact, developing the theory of BV-functions, triangle inequality plays a crucial role. Given $x,y\in \mathbb{R}, \vert x+y\vert \leq \vert x\vert +\vert y\vert$ is triangle inequality. Observe that $\vert x\vert=\sup\lbrace x, -x\rbrace=\max\lbrace x, -x\rbrace$ for all $x\in \mathbb{R}.$ It can be easily prove that triangle inequality holds in $\mathbb{R}$ if and only if supremum exists of two elements in $\mathbb{R}$ (that is nothing but consequence of order compleness property in $\mathbb{R}$). Note that order completeness property is also known as Dedekind completeness property. Triangle inequality in complex number system $\mathbb{C}$ also made possible the study of complex BV-functions. For more informations about BV-functions, we refer to see \cite{JBCF, RLWA} and references therein. Following the reason as footprint, many researchers have definded and studied BV-functions and its generalizations in vector lattices and vector semi-lattices. For that, see \cite{CA17, MS90, TP03, VR02, CS11,  HT09,  GM96,   VV90, BV71, IR19, MF73, VS17} and references therein. Vecor lattices are real vector spaces bearing order sturucture with some additional conditions. 

Order sturucture is an integral part of $C^\ast$-algebras. It provides characterizations of a $C^\ast$-algebra in many aspects. The usefulness of order structure can be seen in the work done in \cite{B06, RVK, Kad51, RJ83, Kak, GKP} and references cited therein. Similar concept of order structure has also been defined and studied in  \cite{MA71, CDOB, J72, HHS74, WN73}. Being impressed by the usefulness and importance of order structure theory, many researchers started working in this direction. Karn among such researchers also started working in this direction. In fact, Karn started working in order theoretic aspects of $C^\ast$-algebras. Quality and importance of Karn's work can be seen in \cite{K10, K14, K16, K18, K19}.

In \cite{K18}, Karn introduced and studied weak notion of vector lattices named as absolutely ordered spaces and absolute order unit spaces. The reason such order spaces are weak vector lattices is that, under the condition \cite[Theorem 4.12]{K16}, absolutely ordered spaces become vector lattices and again using the same condition absolute order unit spaces become unital $AM$-spaces. In fact, vector lattices are precisely absolutely orderd spaces satisfing triangle inequality and  unital $AM$-spaces are precisely absolute order unit spaces satisfying triangle inequality. That is why, Karn called ``absolutely ordered spaces" and "absolute order unit spaces" as ``non-commutative vector lattice models". 

In \cite{VS17}, the authors have defined equivalence relation using orthogonality in Hilbert Space. They have constructed an extensible cone in Hilbert Space by Riesz representation theorem. The authors have shown that Jordan decomposition theorem for BV-functions valued in Hilbert Spaces is satisfied with respect to this equivalence relation. Later, the authors have proved some more finer results for BV-functions valued in Hilbert spaces with respect to the defined equivalence relation. 

The notion of othogonality is also well defined in vector lattices or absolutely ordered spaces. Therefore, we also define some equivalence relation in terms of orthogonality. Consequently, generalize this equivalence relation in a normed space. We construct extensible cones in normed spaces using Hahn Banach extension theorem. Later on, we prove a Jordan decomposition theorem for functions of weakly bounded variations (called WBV-functions in short) valued in a normed space is satisfied with respect to this relation. Moreover, we prove some more finer results for WBV-functions. This is the basic idea of our manuscript.

The development of our paper is as follows. In the second section, we recall basic preliminaries required to understand this manuscript. In second section, we recall definitions of vector lattices, absolutely ordered spaces, absolute order unit space and types of orthogonality in these spaces. In the third section, we define strong and weak type equivalence relations with the help of orthogonality in vector lattices or absolutely ordered spaces see (Definitions \ref{6} and \ref{7}). We study some of the basic properties and characterizations of these equivalence relations (see \ref{3}, \ref{4}, \ref{8}, \ref{9} and \ref{11}). In the fourth section, we recall the definition of an extensible cone in ordered normed spaces. By help of Hahn-Banach extension theorem, we also contruction an extensible cone in normed spaces corresponding to a non-zero element in these spaces (Proposition \ref{5}). This construction is a breakthrough for proving a Jordan Decomposition Theorem for WBV-functions valued in normed spaces (Theorem \ref{12}). In the end of the section, we also find some finer versions of the Jordan decomposition Theorem for WBV-functions (see \ref{001}, \ref{002} and \ref{003}).

\section{Preliminaries}
Throughout this manuscript, $\mathbb{X}$ is a real vector space. If a non-empty subset $\mathbb{X}^+$ of $\mathbb{X}$ satisfies the following two conditions: $x+y$ and $\alpha x\in \mathbb{X}^+$ for all $x,y\in \mathbb{X}^+$ and $\alpha\in \mathbb{R}^+\cup \lbrace 0\rbrace,$ then $\mathbb{X}^+$ is called a cone of $\mathbb{X}$ and in this case, $(\mathbb{X}, \mathbb{X}^+)$ is called a \emph{real ordered vector space}. Let $(\mathbb{X}, \leq)$ be a partial ordered space. Put $\mathbb{X}^+=\lbrace x \in \mathbb{X}:x\geq 0\rbrace.$ Define $x \leq y$ if $y - x \in \mathbb{X}^+.$ It is worth to notice that, $\leq $ is unique in the sense of the following properties: (1) $x \le x$ for all $x\in \mathbb{X},$  (2) If $ x \leq y$ and $y \leq z,$ then $x \leq z$ and (3) If $x \leq y$, then $x + z \leq y + z$ and $\alpha x \leq \alpha y$  for all $z \in \mathbb{X}$ and $\alpha\in \mathbb{R}^+.$ The cone $\mathbb{X}^+$ is said to be \emph{proper} if $\mathbb{X}^+ \cap - \mathbb{X}^+ = \{ 0 \}$ and it is said to be \emph{generating} if $\mathbb{X} = \mathbb{X}^+ - \mathbb{X}^+.$ Observe that $\mathbb{X}^+$ is proper if and only if $\leq$ is anti-symmetric. 

Let $(\mathbb{X},\mathbb{X}^+)$ be an ordered vector space and $x,y\in \mathbb{X}.$ Then the order interval $[x,y]$ in $\mathbb{X}$ is defined  by $[x,y]=\lbrace z\in \mathbb{X}:x\leq z\leq y\rbrace.$ If we say that $[x,y]$ is an order interval, then it means that there exists a ordered vector space $(\mathbb{X}, \mathbb{X}^+)$ in which $[x,y]$ is an order interval. 

Let $e\in \mathbb{X}^+.$ Then $e$ is said to be order unit for $\mathbb{X}$ if given $x\in \mathbb{X},$ there exists some $\epsilon > 0$ such that $\epsilon e\pm x\in \mathbb{X}^+.$ The cone $\mathbb{X}^+$ is said to be \emph{Archimedean} if $x\in \mathbb{X}^+$ whenever $x\in \mathbb{X}$ and a fixed $y\in \mathbb{X}^+$ satisfy $\epsilon y+x\in \mathbb{X}^+$ for all $\epsilon > 0.$ 

Let $(\mathbb{X}, \mathbb{X}^+)$ be a real ordered vector space and  $e$ be an order unit for $\mathbb{X}$ such that $\mathbb{X}^+$ is proper and Archimedean. Given $x\in \mathbb{X},$ we define $$\Vert x\Vert := \inf \lbrace \epsilon > 0: \epsilon e \pm x \in \mathbb{X}^+ \rbrace.$$ It turns out that $\Vert \cdot \Vert$ a norm on $\mathbb{X}$ and it is called the norm determined by $e.$ It can be verify that $\mathbb{X}^+$ is closed in this norm and $\Vert x\Vert e\pm x \in \mathbb{X}^+$ holds for all $x\in \mathbb{X}.$ In this case, we call $\mathbb{X}$ an \emph{order unit space} and we denote it by $(\mathbb{X}, e).$

Let $\mathbb{S}$ be a non-empty subset of a real ordered vector space $\mathbb{X}.$ If there exists $z\in \mathbb{X}$ such that $x\leq z$ for all $x\in \mathbb{S},$ then $\mathbb{S}$ is said to be bounded above in $\mathbb{X}$  by $z$ and in this case, $z$ is said to be an  upper bound of $\mathbb{S}.$ Similarly, if there exists $w\in \mathbb{X}$ such that $w\leq x$ for all $x\in \mathbb{S},$ then $\mathbb{S}$ is said to be bounded below in $\mathbb{X}$  by $w$ and in this case, $w$ is said to be a lower bound of $\mathbb{S}.$ If $z$ is an upper bound of $\mathbb{S}$ and $z\leq w$ whenever $w\in \mathbb{X}$ is any other upper bound of $\mathbb{S},$ then $z \in \mathbb{X}$ is called supremum of $\mathbb{S}.$  Similarly, if $w$ is a lower bound of $\mathbb{S}$ and $w\leq z$ whenever $z\in \mathbb{X}$ is any other lower bound of $\mathbb{S},$ then $w \in \mathbb{X}$ is called infimum of $\mathbb{S}.$ Note that supremum and infimum of $\mathbb{S}$ are unique in $\mathbb{X}.$ If $z$ and $w$ denote the supremum and the infimum of $\mathbb{S}$ in $\mathbb{X},$ then we write:  $\sup\lbrace x:x\in \mathbb{S}\rbrace=z$ and $\inf\lbrace x:x\in \mathbb{S}\rbrace=w.$  It is worth to observe that $\sup\lbrace x:x\in \mathbb{S}\rbrace$ exists in $\mathbb{X}$ if and only if $\inf \lbrace -x:x\in \mathbb{S}\rbrace$ exists in $\mathbb{X}$ and in this case, we have $\sup\lbrace x:x\in \mathbb{S}\rbrace=-\inf\lbrace -x:x\in \mathbb{S}\rbrace.$

A \emph{vector lattice} is a real ordered vector space $\mathbb{X}$ in which supremum of any pair $x$ and $y$ denoted by $\sup\lbrace x, y\rbrace$ exists. In a vector lattice $\mathbb{X},$ infimum of any pair $x$ and $y$ also exists. In a vector lattice, we write: $x\vee y=\sup \lbrace x,y\rbrace,x\wedge y=\inf \lbrace x,y\rbrace$ and $\vert x\vert=x\vee (-x).$ Note that $x\vee y=-((-x)\wedge (-y))$ holds in a vector lattice. 

%If supremum of every non-empty bounded above subset of a vector lattice $\mathbb{X}$ exists, then the vector lattice $\mathbb{X}$ is said to be Dedekind complete.

An ordered vector space $(\mathbb{X},\mathbb{X}^+)$ with a norm $\Vert \cdot \Vert$ is said to be ordered normed space if $\mathbb{X}$ is proper. We denote it by $(\mathbb{X},\mathbb{X}^+, \Vert \cdot \Vert).$

An ordered normed $(\mathbb{X}, \mathbb{X}^+,\Vert \cdot \Vert)$ is said to an \emph{$AM$-space} if the following two conditions are satisfied:
\begin{enumerate}
\item[(1)] $\Vert \cdot \Vert$ is a complete norm on $\mathbb{X}.$
\item[(2)] $(\mathbb{X}, \mathbb{X}^+)$ is a vector lattice.
\item[(3)] $\Vert x \vee y\Vert = \textrm{max} \lbrace \Vert x\Vert, \Vert y\Vert \rbrace$ for all $x,y \in \mathbb{X}^+$ such that $x\wedge y=0.$  
\end{enumerate}

For the preliminaries discussed above, we refer to see \cite{MA71, CDOB, J72, HHS74, WN73}.

Now, we recall a possible non-commutative model for vector lattices introduced by Karn named as absolutely ordered spaces \cite{K18}.

\begin{definition} \cite[Definition 3.4]{K18}\label{30}
A real ordered vector space $(\mathbb{X}, \mathbb{X}^+)$ with a mapping $\vert\cdot\vert: \mathbb{X} \to \mathbb{X}^+$ is said to be \emph{absolutely ordered space} denoted by $(\mathbb{X}, \mathbb{X}^+, \vert \cdot \vert)$ if the following conditions are satisfied:               
     \begin{enumerate}
          \item[(a)] $\vert x \vert = x$ if $x \in \mathbb{X}^+.$
          \item[(b)] $\vert x \vert \pm x \in \mathbb{X}^+$ for all $x \in \mathbb{X}.$
          \item[(c)] $\vert \alpha \cdot x \vert = \vert \alpha \vert \cdot \vert x \vert$ for all $x \in \mathbb{X}$ and $\alpha \in \mathbb{R}.$
          \item[(d)] If $x, y$ and $z \in \mathbb{X}$ with $\vert x - y \vert = x + y$ and $0 \leq z \leq y,$ then $\vert x - z \vert = x + z.$
          \item[(e)] If $x, y$ and $z \in \mathbb{X}$ with $\vert x - y \vert = x + y$ and $\vert x - z \vert = x + z,$ then $\vert x - \vert y \pm z \vert \vert = x + \vert y \pm z \vert.$ 
     \end{enumerate}   
\end{definition}  

\begin{remark}\label{14}
Let $\mathbb{X}$ be an absolutely ordered space and $\pm x\in \mathbb{X}^+.$ By \ref{30}(a) and (c), we have $x=\vert x\vert =\vert -x\vert =-x$ so that $2x=0.$ Consequently $x=0$ and hence $\mathbb{X}^+$ is proper.
\end{remark}

Next result shows that absolutely ordered space is a nearer structure to a vector lattice that is reason it is called by Karn as a possible non-commutative model for vector lattices.

\begin{theorem}\label{2}
Given an absolutely ordered space $(\mathbb{X},\mathbb{X}^+,\vert \cdot\vert)$ and $x,y\in \mathbb{X},$ we write: $$x \vee y:=\frac{1}{2}(x+y+\vert x-y\vert).$$ The following set of  statements is equivalent:
\begin{enumerate}
\item[(1)] $x \vee y=\sup \lbrace x,y\rbrace$ for all $x,y\in \mathbb{X}.$
\item[(2)] $\vee$ is associative in $\mathbb{X}.$
\item[(3)] $\pm x\leq z$ implies $\vert x\vert \leq z$ for all $x,z\in \mathbb{X}.$
\item[(4)] $\vert x+y\vert \leq \vert x\vert +\vert y\vert.$
\end{enumerate}
\end{theorem}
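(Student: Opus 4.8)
The plan is to prove the cyclic chain of implications $(1)\Rightarrow(2)\Rightarrow(3)\Rightarrow(4)\Rightarrow(1)$. First I would record three elementary consequences of Definition \ref{30} that will be used repeatedly. (i) For all $x,y$ the element $x\vee y$ is an upper bound of $\{x,y\}$: since $\vert x-y\vert\pm(x-y)\in\mathbb{X}^+$ by \ref{30}(b), we have $x\vee y-x=\frac12(\vert x-y\vert-(x-y))\ge 0$ and $x\vee y-y=\frac12(\vert x-y\vert+(x-y))\ge 0$. (ii) $\vert x-y\vert=\vert y-x\vert$ by \ref{30}(c), so $\vee$ is commutative. (iii) If $x\le y$ then $y-x\in\mathbb{X}^+$, hence $\vert x-y\vert=y-x$ by \ref{30}(a) and so $x\vee y=y$; in particular $x\vee(-x)=\frac12\vert 2x\vert=\vert x\vert$ by \ref{30}(c).

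For $(1)\Rightarrow(2)$ I would use the standard fact that if $x\vee y=\sup\{x,y\}$ for every pair, then $\sup\{x,y,z\}$ exists and coincides with both $(x\vee y)\vee z=\sup\{x\vee y,z\}$ and $x\vee(y\vee z)=\sup\{x,y\vee z\}$, giving associativity. For $(2)\Rightarrow(3)$, suppose $\vee$ is associative and $\pm x\le z$; by (iii), $x\vee z=z$, $(-x)\vee z=z$ and $x\vee(-x)=\vert x\vert$, so associativity yields $\vert x\vert\vee z=(x\vee(-x))\vee z=x\vee((-x)\vee z)=x\vee z=z$, and then $\vert x\vert\le\vert x\vert\vee z=z$ by (i).

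For $(3)\Rightarrow(4)$: by \ref{30}(b) we have $\pm x\le\vert x\vert$ and $\pm y\le\vert y\vert$, hence $\pm(x+y)\le\vert x\vert+\vert y\vert$, and (3) gives $\vert x+y\vert\le\vert x\vert+\vert y\vert$. For $(4)\Rightarrow(1)$: given $x,y$ and any upper bound $z$ of $\{x,y\}$, we have $z-x,z-y\in\mathbb{X}^+$, so $\vert z-x\vert=z-x$ and $\vert z-y\vert=z-y$ by \ref{30}(a); writing $x-y=(z-y)-(z-x)$ and applying (4) together with $\vert z-x\vert=\vert x-z\vert$, we get $\vert x-y\vert\le\vert z-y\vert+\vert z-x\vert=2z-x-y$, whence $x\vee y=\frac12(x+y+\vert x-y\vert)\le z$; combined with (i), this says $x\vee y=\sup\{x,y\}$ and closes the cycle.

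All four implications reduce to short manipulations with the axioms of Definition \ref{30} and the algebraic identities $x-y=(z-y)-(z-x)$ and $2x=(z+x)-(z-x)$, so I do not anticipate a real obstacle. The only step requiring a small idea is $(2)\Rightarrow(3)$, where associativity of $\vee$ is exploited to absorb $\vert x\vert=x\vee(-x)$ into the upper bound $z$.
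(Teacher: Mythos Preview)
Your cyclic argument $(1)\Rightarrow(2)\Rightarrow(3)\Rightarrow(4)\Rightarrow(1)$ is correct: each implication reduces, as you say, to the axioms of Definition~\ref{30} together with the elementary facts (i)--(iii) you record at the outset, and I see no gap. The step $(2)\Rightarrow(3)$ via $|x|\vee z=(x\vee(-x))\vee z=x\vee((-x)\vee z)=x\vee z=z$ is the neatest use of associativity, and $(4)\Rightarrow(1)$ via $|x-y|\le |z-y|+|z-x|=2z-x-y$ is exactly the right computation.

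Note, however, that the paper does \emph{not} supply its own proof of this theorem: it is stated in the preliminaries section as a known characterization (essentially \cite[Theorem~4.12]{K16} in the notation of \cite{K18}), so there is nothing in the manuscript to compare your argument against. Your proof is a self-contained verification of the cited result and would serve perfectly well if the paper wished to include one; the route you take---in particular the use of associativity to collapse $|x|\vee z$ down to $z$---is the standard one in the literature on these spaces.
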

     
In the following definition, we recall few types of orthogonalities in absolutely ordered spaces.     
     
\begin{definition}[\cite{K18}, Definition 3.6]\label{10}
	Let $\| \cdot \|$ be a norm defined on an absolutely ordered space $(\mathbb{X}, \mathbb{X}^+, \vert\cdot\vert)$ and $x, y \in \mathbb{X}^+.$
	\begin{enumerate} 
		\item[(a)] $x$ said to be \emph{orthogonal} to $y$ ($x \perp y$) if $\vert x - y \vert = x + y.$ We write: $x^+ := \frac{1}{2}(\vert x \vert + x)$ and $x^- := \frac{1}{2}(\vert x \vert - x).$ Then $x = x^+ - x^-$ and $\vert x \vert = x^+ + x^-,$ therefore $x^+ \perp x^-.$ This decomposition is unique in the sense: $x = x_1 - x_2$ with $x_1 \perp x_2,$ then $x_1 = x^+$ and $x_2 = x^-.$ Thus every element in $\mathbb{X}$ has a unique orthogonal decomposition in $\mathbb{X}^+.$ Moreover, if $x$ and $y\in \mathbb{X},$ then $x$ is said to be \emph{orthogonal} to $y$ (we still write $x \perp y$) if $\vert x\vert \perp \vert y\vert$  (see \cite[Definition 2.2]{PI19}).
		\item[(b)] $x$ is said to be \emph{$\infty$-orthogonal} to $y$ ($x \perp_\infty y$) if $\Vert \alpha x + \beta y\Vert = \max \lbrace \Vert \alpha u \Vert, \Vert \beta v \Vert \rbrace$ for all $\alpha, \beta \in \mathbb{R}.$
		\item[(c)] $x$ is said to be \emph{absolutely $\infty$-orthogonal} to $y$ ($x \perp_\infty^a y$) if $x_1 \perp_\infty y_1$ whenever $0 \leq x_1 \leq x$ and $0 \leq y_1 \leq y.$
	\end{enumerate}
\end{definition}

The next result describes some properties of a vector lattice.

\begin{theorem}[\cite{CDOB}]\label{13}
Let $\mathbb{X}$ be a vector lattice and $x,y,z\in \mathbb{X}.$ Then the following statements hold:
\begin{enumerate}
\item[(1)] $x\vee y=\frac{1}{2}(x+y+\vert x-y\vert).$
\item[(2)] $x\wedge y=\frac{1}{2}(x+y-\vert x-y\vert).$
\item[(3)] $\vert x-y\vert=x\vee y-x\wedge y.$
\item[(4)] $\vert \vert x\vert - \vert y\vert \vert\leq \vert x\pm y\vert$
\item[(5)] $\vert x\vee z-y\vee z\vert \leq \vert x-y\vert$ and $\vert x\wedge z-y\wedge z\vert \leq \vert x-y\vert$ (Birkhoff's inequalities).
\item[(6)] $\vert x^+ -y^+\vert \leq \vert x-y\vert$ and $\vert x^- -y^-\vert \leq \vert x-y\vert.$
\item[(7)] $\vert x+y\vert \vee \vert x-y\vert=\vert x\vert + \vert y\vert.$
\item[(8)] $\vert x+y\vert \wedge \vert x-y\vert=\vert \vert x\vert -\vert y\vert \vert.$
\end{enumerate}
\end{theorem}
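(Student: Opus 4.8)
The plan is to establish the eight identities in the stated order, each one from the ones before together with three elementary facts available in every vector lattice: translation invariance of $\vee$ and $\wedge$ (so $a+(b\vee c)=(a+b)\vee(a+c)$), positive homogeneity ($\lambda(b\vee c)=(\lambda b)\vee(\lambda c)$ for $\lambda\geq 0$), and $a\wedge b=-((-a)\vee(-b))$; recall also $\vert a\vert=a\vee(-a)$. For (1), from $2(a\vee 0)=(2a)\vee 0=a+(a\vee(-a))=a+\vert a\vert$ I get $a\vee 0=\tfrac12(a+\vert a\vert)$; taking $a=x-y$ and translating by $y$ gives $x\vee y=(x-y)\vee 0+y=\tfrac12(x+y+\vert x-y\vert)$. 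Identity (2) follows by applying (1) to $-x,-y$ and using $x\wedge y=-((-x)\vee(-y))$, and (3) is the difference of (1) and (2). Adding (1) and (2) also records the modular law $a\vee b+a\wedge b=a+b$, which will be used below.

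For (4), the triangle inequality holds in $\mathbb{X}$ --- indeed $\vert x-y\vert+\vert y\vert\geq(x-y)+y=x$ and $\vert x-y\vert+\vert y\vert\geq(y-x)+(-y)=-x$, hence $\vert x-y\vert+\vert y\vert\geq x\vee(-x)=\vert x\vert$ (cf. Theorem~\ref{2}) --- so $\vert x\vert-\vert y\vert\leq\vert x-y\vert$, and by symmetry $\vert y\vert-\vert x\vert\leq\vert x-y\vert$; since $\pm a\leq z$ forces $\vert a\vert=a\vee(-a)\leq z$, we conclude $\vert\vert x\vert-\vert y\vert\vert\leq\vert x-y\vert$, and replacing $y$ by $-y$ gives the bound with $\vert x+y\vert$. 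For (5), from $x\leq\vert x-y\vert+y\leq\vert x-y\vert+(y\vee z)$ and $z\leq\vert x-y\vert+z\leq\vert x-y\vert+(y\vee z)$ I get $x\vee z\leq\vert x-y\vert+(y\vee z)$, i.e. $x\vee z-y\vee z\leq\vert x-y\vert$; exchanging $x$ and $y$ gives the reverse inequality, so $\vert x\vee z-y\vee z\vert\leq\vert x-y\vert$, and applying this to $-x,-y,-z$ together with $a\wedge b=-((-a)\vee(-b))$ yields $\vert x\wedge z-y\wedge z\vert\leq\vert x-y\vert$. Identity (6) is the special case $z=0$ of (5) (applied also to $-x,-y$), via $x^+=x\vee 0$ and $x^-=(-x)\vee 0$.

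For (7), I would invoke the finite distributivity of addition over $\vee$, which follows from translation invariance and the associativity and commutativity of $\vee$: $\vert x\vert+\vert y\vert=(x\vee(-x))+(y\vee(-y))=(x+y)\vee(x-y)\vee(-x+y)\vee(-x-y)=((x+y)\vee(-(x+y)))\vee((x-y)\vee(-(x-y)))=\vert x+y\vert\vee\vert x-y\vert$. For (8), substituting $x\mapsto\tfrac12(x+y)$ and $y\mapsto\tfrac12(x-y)$ into (7) and using positive homogeneity gives $\vert x\vert\vee\vert y\vert=\tfrac12(\vert x+y\vert+\vert x-y\vert)$; combining this with the modular law in the form $\vert x+y\vert\wedge\vert x-y\vert=\vert x+y\vert+\vert x-y\vert-\vert x+y\vert\vee\vert x-y\vert=\vert x+y\vert+\vert x-y\vert-(\vert x\vert+\vert y\vert)$ (the last equality by (7)) yields $\vert x+y\vert\wedge\vert x-y\vert=2(\vert x\vert\vee\vert y\vert)-\vert x\vert-\vert y\vert=(\vert x\vert\vee\vert y\vert)-(\vert x\vert\wedge\vert y\vert)=\vert\vert x\vert-\vert y\vert\vert$, where the final step applies (3) to $\vert x\vert$ and $\vert y\vert$. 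There is no serious obstacle here --- the result is classical, hence the citation \cite{CDOB} --- but the subtlest point is (7), which must be deduced from the lattice distributive law for $+$ over $\vee$ rather than by a naive term-by-term manipulation; once that law is granted, everything else is bookkeeping with the identities already in hand.
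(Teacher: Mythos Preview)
Your argument is correct in every part. The proof of (1)--(3) via translation invariance and positive homogeneity is the standard one; the modular law you record is used exactly as needed. The derivation of (4) from the triangle inequality, of (5) by bounding $x\vee z$ from above and then symmetrizing, and of (6) as the $z=0$ specialization are all sound. The computation in (7) is the delicate step, and you handle it correctly: translation invariance gives $(a\vee b)+(c\vee d)=(a+c)\vee(a+d)\vee(b+c)\vee(b+d)$ once associativity and commutativity of $\vee$ are invoked, and regrouping yields $\vert x+y\vert\vee\vert x-y\vert$. The substitution trick for (8), followed by the modular law and (3), is clean and correct.

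As for comparison with the paper: there is nothing to compare. Theorem~\ref{13} is stated in the paper with a bare citation to \cite{CDOB} and no proof is given; it is invoked purely as background. Your write-up therefore supplies what the paper deliberately omits, and it does so along the lines one finds in standard references such as \cite{CDOB} or \cite{HHS74}.
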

 
Finally, we recall definition of absolute order unit spaces which nearer structure to \emph{$AM$-space} \cite{K18}.

\begin{definition}[\cite{K18}, Definition 3.8]
Let $(\mathbb{X}, \mathbb{X}^+, \vert \cdot \vert)$ be an absolutely ordered space with an order unit norm $\Vert \cdot \Vert$ defined on $\mathbb{X}$ determined by the order unit $e$ such that $\mathbb{X}^+$ is norm closed. If $\perp = \perp^a_\infty$ on $\mathbb{X}^+,$ then $(\mathbb{X}, \mathbb{X}^+, \vert \cdot \vert, e)$ is said to an \emph{absolute order unit space}. 
\end{definition}

The self-adjoint part of a unital C$^*$-algebra is an absolute order unit space \cite[Remark 3.9(1)]{K18}. In fact, more generally, every unital $JB$-algebra is also an absolute order unit space.   

Finally, we bind up this section by recalling the definition of functions of weakly bounded variation.

\begin{definition}[\cite{VS17}, Definition 1.4]
Let $\mathbb{X}$ be a normed space and $[a,b]$ be an order interval in any other ordered vector space $(\mathbb{Y}, \mathbb{Y}^+).$ Consider a function $f:[a,b]\to \mathbb{R}.$ Then $f$ is said to be a function of weakly bounded variation if $x^*\circ f:[a,b]\to \mathbb{R}$ is a function of bounded variation for all $x^*\in \mathbb{X}^*,$ where $\mathbb{X}^*$ denotes the normed space of all the bounded linear functionals on $\mathbb{X}.$ In short, functions of weakly bounded variation are called WBV-functions.
\end{definition}

\section{Strong and weak relations}

We begin this section with the definition of relation arising from \ref{10}(a). 

\begin{definition}\label{6}
Let $\mathbb{X}$ be an absolutely ordered space and $x_0\in \mathbb{X}.$ For $x$ and $y\in \mathbb{X},$ we say $x$ is strongly related to $y$ if $x-y\perp x_0.$ We denote strong relation of $x$ with $y$ by $x\sim_{x_0} y.$   
\end{definition}

\begin{remark}
If $x\sim_{x_0} y,$ then $x\pm z\sim_{x_0} y\pm z$ for all $z\in \mathbb{X}.$
\end{remark}

In the next result, it turns out that strong relation $\sim_{x_0}$ is an equivalence relation on a vector lattice $\mathbb{X}.$

\begin{proposition}
Let $\mathbb{X}$ be a vector lattice and $x_0\in \mathbb{X}.$ Then strong relation $\sim_{x_0}$ is an equivalence relation on $\mathbb{X}.$
\end{proposition}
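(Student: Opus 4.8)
The plan is to verify the three defining properties of an equivalence relation directly from Definition~\ref{6}, using the characterizations of orthogonality available in a vector lattice. Recall that in a vector lattice $x \perp y$ means $|x| \wedge |y| = 0$; equivalently, by \ref{10}(a) together with Theorem~\ref{13}(3), $|\,|x|-|y|\,| = |x|+|y|$. Reflexivity is immediate: for any $x$ we have $x - x = 0$, and $0 \perp x_0$ since $|0| \wedge |x_0| = 0 \wedge |x_0| = 0$; hence $x \sim_{x_0} x$. Symmetry is equally short: if $x \sim_{x_0} y$ then $x - y \perp x_0$, and since $|x-y| = |-(x-y)| = |y-x|$ by Definition~\ref{30}(c), we get $y - x \perp x_0$, i.e.\ $y \sim_{x_0} x$.

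The substantive step is transitivity. Suppose $x \sim_{x_0} y$ and $y \sim_{x_0} z$, so that $x - y \perp x_0$ and $y - z \perp x_0$. I want to conclude $x - z \perp x_0$, and the natural route is to write $x - z = (x-y) + (y-z)$ and show that a sum of two elements each orthogonal to $x_0$ is again orthogonal to $x_0$. This is where the absolutely-ordered-space axioms do the work: condition \ref{30}(e) says precisely that if $|u - w| = u + w$ and $|u - v| = u + v$ then $|u - |v \pm w|\,| = u + |v \pm w|$. Applying this with $u = |x_0|$, $v = |x-y|$, $w = |y-z|$ — which is legitimate because $x-y \perp x_0$ unpacks (via \ref{10}(a)) to $|\,|x_0| - |x-y|\,| = |x_0| + |x-y|$, and similarly for $y-z$ — yields $|\,|x_0| - |\,|x-y| + |y-z|\,|\,| = |x_0| + |\,|x-y|+|y-z|\,|$, that is, $x_0 \perp (|x-y| + |y-z|)$. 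It then remains to pass from this to $x_0 \perp (x - z)$, which uses the lattice triangle inequality: $|x - z| \le |x-y| + |y-z|$ (Theorem~\ref{13}, consequence of (4)/(7)), so $|x_0| \wedge |x - z| \le |x_0| \wedge (|x-y|+|y-z|) = 0$, and positivity forces $|x_0| \wedge |x-z| = 0$, i.e.\ $x - z \perp x_0$.

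The main obstacle, and the point that genuinely requires the vector-lattice hypothesis rather than just an absolutely ordered space, is this last monotonicity argument: orthogonality of $x_0$ to a dominating positive element need not descend to a dominated one in a general absolutely ordered space, but in a vector lattice $0 \le a \le b$ and $|x_0|\wedge b = 0$ immediately give $0 \le |x_0|\wedge a \le |x_0|\wedge b = 0$. So the proof structure is: reflexivity and symmetry by inspection; transitivity by (i) the additivity-of-orthogonality statement \ref{30}(e) applied to absolute values, and (ii) a lattice-theoretic squeeze using the triangle inequality $|x-z|\le|x-y|+|y-z|$. I should double-check at the outset that in a vector lattice the two formulations of $\perp$ — the defining one $|x-y|=x+y$ for positive elements extended via absolute values, and the infimum formulation $|x|\wedge|y|=0$ — agree, so that Theorem~\ref{13}(3) and the disjointness calculus are both at my disposal; this equivalence is exactly Theorem~\ref{13}(3) combined with \ref{10}(a).
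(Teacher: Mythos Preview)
Your proof is correct and follows essentially the same route as the paper: reflexivity and symmetry are immediate, and transitivity combines axiom~\ref{30}(e) with the lattice triangle inequality and a descent step. One correction to your commentary, though: the descent of orthogonality from a dominating positive element to a dominated one is precisely axiom~\ref{30}(d) and holds in \emph{any} absolutely ordered space---the paper invokes it directly rather than passing through the $\wedge$-formulation---so the step that genuinely requires the vector-lattice hypothesis is the triangle inequality $|x-z|\le |x-y|+|y-z|$ (Theorem~\ref{2}(4)), not the monotonicity argument you single out.
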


\begin{proof}
Let $x,y$ and $z\in \mathbb{X}.$ Since $x-x=0\perp x_0,$ we get thet $x\sim_{x_0} x.$ Thus $\sim_{x_0}$ is reflexive. Next, assume that $x\sim_{x_0}y$ so that $x-y\perp x_0.$ By Definition \ref{30}(3), we get that $y-x\perp x_0.$ In this case, $y\sim_{x_0}x$ and consequently $\sim_{x_0}$ is symmetric. Finally, also assume that $y\sim_{x_0} z.$ Then $y-z\perp x_0.$ By definition \ref{30}(e), we conclude that $\vert (x-y)\vert +\vert (y-z)\vert \perp \vert x_0\vert.$ Using characterization theorem \ref{2} of vector lattice, we have $\vert x-z\vert \leq \vert (x-y)\vert +\vert (y-z)\vert$ and consequently by definition \ref{30}(d), we get $\vert x-z\vert \perp \vert x_0\vert~i.e.~x-z\perp x_0.$  Thus $x\sim_{x_0}z$ so that $\sim_{x_0}$ is also transitive. Hence $\sim_{x_0}$ is an equivalence relation on $\mathbb{X}.$
\end{proof}

The following result explains that scalar multiplication preserves strong relation.

\begin{proposition}\label{3}
Let  $\mathbb{X}$ be an absolutely ordered space and $x_0\in \mathbb{X}.$ If $x\sim_{x_0} y$, then 
\begin{enumerate}
\item[(1)] $\alpha x\sim_{x_0} \alpha y$ for all $\alpha\in \mathbb{R}.$
\item[(2)] $x\sim_{\beta x_0}y$ for all $\beta \in \mathbb{R}.$
\end{enumerate}
In particular, If $x\sim_{x_0} y$, then $\alpha x\sim_{\beta x_0} \alpha y$ for all $\alpha, \beta \in \mathbb{R}.$
\end{proposition}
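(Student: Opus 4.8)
The plan is to isolate a single auxiliary claim about rescaling one factor of an orthogonal pair in $\mathbb{X}^+$, and then to read off both parts (and the final assertion) from it. The claim is: if $u,v\in\mathbb{X}^+$ with $u\perp v$ and $\gamma\geq 0$, then $\gamma u\perp v$. Along the way I would also record that $\perp$ is symmetric — for $u,v\in\mathbb{X}^+$ one has $|u-v|=|v-u|$ by Definition \ref{30}(c) with $\alpha=-1$, so $u\perp v\iff v\perp u$, and hence $x\perp y\iff y\perp x$ for all $x,y\in\mathbb{X}$. The claim is clear when $\gamma=0$, since $|0-v|=|v|=v=0+v$ by Definition \ref{30}(a); and when $0<\gamma\leq 1$ it follows from Definition \ref{30}(d) applied to the triple $(v,u,\gamma u)$, whose hypotheses $|v-u|=v+u$ and $0\leq\gamma u\leq u$ hold, the last inequalities holding because $\gamma u\in\mathbb{X}^+$ and $(1-\gamma)u\in\mathbb{X}^+$.

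The remaining case $\gamma>1$ is where the work lies, and I expect it to be the main obstacle: in a general absolutely ordered space the lattice-type conclusions of Theorem \ref{2} are unavailable, so one must use the axioms directly. Here I would pick $n\in\mathbb{N}$ with $n\geq\gamma$ and prove $v\perp ku$ for $k=1,\dots,n$ by induction on $k$, the step from $k$ to $k+1$ coming from Definition \ref{30}(e) applied to $(v,ku,u)$ with the ``$+$'' sign, together with the observation $|ku+u|=(k+1)u$ since $ku+u\in\mathbb{X}^+$. Once $v\perp nu$ is in hand, $0\leq\gamma u\leq nu$, so a final application of Definition \ref{30}(d) to $(v,nu,\gamma u)$ gives $\gamma u\perp v$, completing the claim.

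With the claim available, part (1) follows by writing $\alpha x-\alpha y=\alpha(x-y)$ and using Definition \ref{30}(c) to get $|\alpha(x-y)|=|\alpha|\,|x-y|$; since $x\sim_{x_0}y$ means $|x-y|\perp|x_0|$, the claim with $u=|x-y|$, $v=|x_0|$, $\gamma=|\alpha|$ gives $|\alpha(x-y)|\perp|x_0|$, i.e. $\alpha x-\alpha y\perp x_0$, i.e. $\alpha x\sim_{x_0}\alpha y$. Part (2) is the mirror image: using $|\beta x_0|=|\beta|\,|x_0|$ from Definition \ref{30}(c) and the claim with $u=|x_0|$, $v=|x-y|$, $\gamma=|\beta|$ gives $|\beta|\,|x_0|\perp|x-y|$, which by symmetry of $\perp$ is $|x-y|\perp|\beta x_0|$, i.e. $x-y\perp\beta x_0$, i.e. $x\sim_{\beta x_0}y$. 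Finally the ``in particular'' statement is obtained by composing the two: from $x\sim_{x_0}y$, part (1) gives $\alpha x\sim_{x_0}\alpha y$, and then part (2) applied to the pair $\alpha x,\alpha y$ gives $\alpha x\sim_{\beta x_0}\alpha y$.
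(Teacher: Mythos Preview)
Your proof is correct, but the paper handles the case $\gamma>1$ (equivalently $|\alpha|>1$) by a shorter and different device that avoids axiom~(e) and the induction entirely. Starting from $|x-y|\perp|x_0|$, the paper notes that $0\le \tfrac{1}{|\alpha|}\,|x_0|\le |x_0|$, so axiom~(d) applied on the $|x_0|$-side yields $|x-y|\perp \tfrac{1}{|\alpha|}\,|x_0|$; then the homogeneity axiom~(c) lets one multiply the defining identity $\bigl|\,|x-y|-\tfrac{1}{|\alpha|}|x_0|\,\bigr|=|x-y|+\tfrac{1}{|\alpha|}|x_0|$ through by $|\alpha|$ to obtain $|\alpha|\,|x-y|\perp |x_0|$, i.e.\ $\alpha x\sim_{x_0}\alpha y$. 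Thus the paper's argument uses only (c) and (d), and the implicit one-line fact that $u\perp v$ implies $cu\perp cv$ for $c>0$. Your route---induct via~(e) to reach $v\perp nu$ and then come back down with~(d)---works just as well, and has the minor virtue of isolating the rescaling claim $u\perp v\Rightarrow \gamma u\perp v$ as a reusable lemma and of making the symmetry of $\perp$ explicit; but it is longer and invokes an extra axiom. The case $|\alpha|\le 1$, the derivation of~(2), and the composition giving the ``in particular'' statement are essentially the same in both proofs.
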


\begin{proof} Let $x,y\in \mathbb{X}$ such that $x\sim_{x_0} y$ and $\alpha,\beta \in \mathbb{R}.$ Then $\vert x-y\vert \perp \vert x_0\vert.$ There are two possibilities: either $\vert \alpha\vert \leq 1$ or $\vert \alpha\vert > 1.$ First assume that $\vert \alpha\vert \leq 1.$ Since $\vert \alpha x-\alpha y\vert=\vert \alpha\vert \vert x-y\vert \leq \vert x-y\vert \perp \vert x_0 \vert,$ by definition \ref{30}(d), we get $\vert \alpha x-\alpha y\vert \perp \vert x_0\vert.$ Thus $\alpha x\sim_{x_0} \alpha y.$ Finally, assume that $\vert \alpha\vert >1.$ Then $\vert x-y\vert \perp \vert x_0\vert \geq \frac{1}{\vert \alpha \vert}\vert x_0\vert,$ again by definition \ref{30}(d), we get  $x-y \perp \frac{1}{\alpha}x_0.$ Therefore $\alpha x-\alpha y \perp x_0~i.e.~\alpha x \sim_{x_0}\alpha y.$ 
Similarily following the proof of (1), we can also show that $x\sim_{\beta x_0}y.$

Next, if $x\sim_{x_0} y,$ then $\alpha x\sim_{x_0} \alpha y$ follows from the part (1) and now applying the part (2) for $\alpha x\sim_{x_0} \alpha y,$ we conclude that $\alpha x\sim_{\beta x_0} \alpha y.$
\end{proof}

The next result shows that the strong relation $\sim_{x_0}$ is additive.

\begin{lemma}\label{4}
Let $\mathbb{X}$ be a vector lattice and $x_0,x_1, x_2, y_1, y_2 \in \mathbb{X}.$ If $x_1\sim_{x_0} y_1$ and  $x_2\sim_{x_0} y_2,$ then $x_1\pm x_2\sim_{x_0}y_1\pm y_2.$
\end{lemma}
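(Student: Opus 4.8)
The plan is to translate the statement into pure orthogonality relations and then run the same mechanism used to prove transitivity of $\sim_{x_0}$. Set $u := x_1 - y_1$ and $v := x_2 - y_2$. By definition, the hypotheses $x_1 \sim_{x_0} y_1$ and $x_2 \sim_{x_0} y_2$ say $u \perp x_0$ and $v \perp x_0$, that is, $\vert u \vert \perp \vert x_0 \vert$ and $\vert v \vert \perp \vert x_0 \vert$. Since $(x_1 \pm x_2) - (y_1 \pm y_2) = u \pm v$, the conclusion $x_1 \pm x_2 \sim_{x_0} y_1 \pm y_2$ is equivalent to $u \pm v \perp x_0$, i.e.\ to $\vert u \pm v \vert \perp \vert x_0 \vert$. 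So it suffices to show that $\vert u \vert \perp \vert x_0 \vert$ and $\vert v \vert \perp \vert x_0 \vert$ force both $\vert u + v \vert \perp \vert x_0 \vert$ and $\vert u - v \vert \perp \vert x_0 \vert$.

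First I would build an element orthogonal to $\vert x_0 \vert$ that dominates $\vert u \pm v \vert$. Applying Definition~\ref{30}(e) with $x = \vert x_0 \vert$, $y = \vert u \vert$, $z = \vert v \vert$ — the required hypotheses $\vert \vert x_0 \vert - \vert u \vert \vert = \vert x_0 \vert + \vert u \vert$ and $\vert \vert x_0 \vert - \vert v \vert \vert = \vert x_0 \vert + \vert v \vert$ are exactly $\vert x_0 \vert \perp \vert u \vert$ and $\vert x_0 \vert \perp \vert v \vert$ — and taking the $+$ sign, together with $\vert \vert u \vert + \vert v \vert \vert = \vert u \vert + \vert v \vert$ (Definition~\ref{30}(a)), gives $\vert x_0 \vert \perp (\vert u \vert + \vert v \vert)$. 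On the other hand, since $\mathbb{X}$ is a vector lattice, Theorem~\ref{13}(7) yields $\vert u + v \vert \vee \vert u - v \vert = \vert u \vert + \vert v \vert$, hence $0 \leq \vert u \pm v \vert \leq \vert u \vert + \vert v \vert$ for both choices of sign.

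Finally, I would feed these two observations into Definition~\ref{30}(d) with $x = \vert x_0 \vert$, $y = \vert u \vert + \vert v \vert$, $z = \vert u \pm v \vert$: here $\vert x - y \vert = x + y$ by the previous step and $0 \leq z \leq y$ was just verified, so $\vert x - z \vert = x + z$, i.e.\ $\vert x_0 \vert \perp \vert u \pm v \vert$. Translating back, $u \pm v \perp x_0$, which is precisely $x_1 \pm x_2 \sim_{x_0} y_1 \pm y_2$. I do not expect a genuine obstacle here: the argument is essentially the transitivity proof for $\sim_{x_0}$ repeated, the only point needing a little care being that the two signs $u+v$ and $u-v$ are disposed of simultaneously, since the single dominating element $\vert u \vert + \vert v \vert$ — produced from the $+$ sign in Definition~\ref{30}(e) and appearing on the right of Theorem~\ref{13}(7) — exceeds $\vert u \pm v \vert$ in both cases.
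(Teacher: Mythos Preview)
Your proof is correct and follows essentially the same route as the paper: both arguments use Definition~\ref{30}(e) to obtain $\vert x_0\vert \perp (\vert u\vert + \vert v\vert)$, then bound $\vert u\pm v\vert$ by $\vert u\vert + \vert v\vert$, and finish with Definition~\ref{30}(d). The only cosmetic differences are that the paper invokes the triangle inequality (Theorem~\ref{2}(4)) for the bound and handles the minus sign separately via Proposition~\ref{3}(1), whereas you use Theorem~\ref{13}(7) to treat both signs at once.
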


\begin{proof}
Let $x_1\sim_{x_0} y_1$ and  $x_2\sim_{x_0} y_2$ in $\mathbb{X}.$ Then $\vert x_1-y_1\vert \perp \vert x_0\vert$ and $\vert x_2-y_2\vert \perp \vert x_0\vert$ so that $\vert x_1-y_1\vert +\vert x_2-y_2\vert \perp \vert x_0\vert$ by definition \ref{30}(e). Since $\mathbb{X}$ is a vector lattice, we have $\vert (x_1+x_2)-(y_1+y_2)\vert \leq \vert x_1-y_1\vert +\vert x_2-y_2\vert.$  Thus $\vert (x_1+x_2)-(y_1+y_2)\vert \perp \vert x_0\vert~i.e.~x_1+x_2\sim_{x_0} y_1+y_2$ follows from by definition \ref{30}(d). 

Since $x_2\sim_{x_0} y_2$ in $\mathbb{X},$ by Proposition \ref{3}(1), we get that $-x_2\sim_{x_0} -y_2$ in $\mathbb{X}.$ Following the same proof, $x_1\sim_{x_0} y_1$ and  $-x_2\sim_{x_0} -y_2$ implies $x_1- x_2\sim_{x_0}y_1- y_2.$
\end{proof}

In the next result, we characterize strong equivalence $\sim_{x_0}$ in terms of positive and negative parts, supremum and infimum.

\begin{proposition}\label{8}
Let $\mathbb{X}$ be a vector lattice and $x_0,x$ and $y \in \mathbb{X}.$ Then the following statements are equivalent:
\begin{enumerate}
\item[(1)] $x\sim_{x_0}y$ 
\item[(2)] $x^+\sim_{x_0}y^+$ and $x^-\sim_{x_0}y^-$
\item[(3)] $x\vee y \sim_{x_0} x\wedge y$
\item[(4)] $x\vee z \sim_{x_0} y\vee z$ and $x\wedge z \sim_{x_0} y\wedge z$ for all $z\in \mathbb{X}$
\end{enumerate}
\end{proposition}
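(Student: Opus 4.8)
The plan is to establish the cyclic chain of implications $(1)\Rightarrow(2)\Rightarrow(3)\Rightarrow(4)\Rightarrow(1)$, exploiting the fact that in a vector lattice we have at our disposal all the inequalities of Theorem \ref{13}, together with Definition \ref{30}(d) (which says that if $\vert a\vert\leq \vert b\vert$ and $b\perp x_0$, then $a\perp x_0$; more precisely, if $a,b\geq 0$, $a\leq b$ and $b\perp\vert x_0\vert$, then $a\perp\vert x_0\vert$) and Definition \ref{30}(e) (stability of $\perp$ under sums of absolute values). The recurring mechanism throughout is: to show $u\sim_{x_0}v$, it suffices to produce some $w$ with $w\sim_{x_0}0$ (equivalently $\vert w\vert\perp\vert x_0\vert$) such that $\vert u-v\vert\leq\vert w\vert$, and then invoke \ref{30}(d).

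First I would prove $(1)\Rightarrow(2)$. From $x\sim_{x_0}y$ we have $\vert x-y\vert\perp\vert x_0\vert$. By Theorem \ref{13}(6), $\vert x^+-y^+\vert\leq\vert x-y\vert$ and $\vert x^--y^-\vert\leq\vert x-y\vert$, so \ref{30}(d) applied to each gives $\vert x^+-y^+\vert\perp\vert x_0\vert$ and $\vert x^--y^-\vert\perp\vert x_0\vert$, i.e. $x^+\sim_{x_0}y^+$ and $x^-\sim_{x_0}y^-$. Next, $(2)\Rightarrow(1)$ would follow immediately from Lemma \ref{4} (additivity of $\sim_{x_0}$, valid since $\mathbb{X}$ is a vector lattice): $x=x^+-x^-\sim_{x_0}y^+-y^-=y$. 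This shows $(1)\Leftrightarrow(2)$; I record it because it makes the remaining equivalences cleaner.

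For $(1)\Rightarrow(3)$: by Theorem \ref{13}(3), $\vert(x\vee y)-(x\wedge y)\vert=x\vee y-x\wedge y=\vert x-y\vert$, hence $\vert(x\vee y)-(x\wedge y)\vert\perp\vert x_0\vert$, which is exactly $x\vee y\sim_{x_0}x\wedge y$. Conversely, for $(3)\Rightarrow(1)$, again $\vert x-y\vert=\vert(x\vee y)-(x\wedge y)\vert$, so the hypothesis $x\vee y\sim_{x_0}x\wedge y$ gives $\vert x-y\vert\perp\vert x_0\vert$ directly — so $(1)\Leftrightarrow(3)$ is essentially the identity in \ref{13}(3). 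Finally $(1)\Rightarrow(4)$: by Birkhoff's inequalities \ref{13}(5), $\vert x\vee z-y\vee z\vert\leq\vert x-y\vert$ and $\vert x\wedge z-y\wedge z\vert\leq\vert x-y\vert$, so \ref{30}(d) yields $x\vee z\sim_{x_0}y\vee z$ and $x\wedge z\sim_{x_0}y\wedge z$ for all $z$. For $(4)\Rightarrow(1)$, specialize $z=0$: then $x\vee 0\sim_{x_0}y\vee 0$ and $x\wedge 0\sim_{x_0}y\wedge 0$, i.e. $x^+\sim_{x_0}y^+$ and (using $x\wedge 0=-(-x)^-=-x^-$ together with Proposition \ref{3}(1)) $x^-\sim_{x_0}y^-$, so we are back to statement $(2)$, whence $(1)$ by the first step.

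The argument is essentially a bookkeeping exercise once the right identities and inequalities from Theorem \ref{13} are lined up; the only genuine point to be careful about is the deduction $x\wedge 0\sim_{x_0}y\wedge 0\Rightarrow x^-\sim_{x_0}y^-$ in the step $(4)\Rightarrow(1)$, where one must correctly use $x\wedge 0=-x^-$ and invoke scalar invariance from Proposition \ref{3}(1) to flip signs — this is the step most prone to a sign slip. Everything else is a direct application of \ref{30}(d) to an inequality $\vert u-v\vert\leq\vert x-y\vert$ provided by \ref{13}.
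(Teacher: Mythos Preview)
Your proof is correct and follows essentially the same route as the paper: $(1)\Leftrightarrow(2)$ via Theorem~\ref{13}(6) and Lemma~\ref{4}, $(1)\Leftrightarrow(3)$ via the identity $\vert x-y\vert=x\vee y-x\wedge y$, $(1)\Rightarrow(4)$ via Birkhoff's inequalities, and $(4)\Rightarrow(2)$ by specializing $z=0$. You are in fact slightly more careful than the paper in the last step, where you explicitly note $x\wedge 0=-x^{-}$ and invoke Proposition~\ref{3}(1) to pass from $x\wedge 0\sim_{x_0}y\wedge 0$ to $x^{-}\sim_{x_0}y^{-}$; the paper glosses over this sign.
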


\begin{proof}
(1) and (2) are equivalent: First assume that $x\sim_{x_0}y$ so that $\vert x-y\vert \perp \vert x_0\vert.$ Since $\mathbb{X}$ is a vector lattice, by Theorem \ref{13}(6), we have $\vert x^+-y^+\vert \leq \vert x-y\vert$ and $\vert x^--y^-\vert \leq \vert x-y\vert.$ Therefore $\vert x^+-y^+\vert \perp \vert x_0\vert$ and $\vert x^--y^-\vert \perp \vert x_0\vert$ follows from by definition \ref{30}(d) and consequently $x^+\sim_{x_0}y^+$ and $x^-\sim_{x_0}y^-.$ Conversely, assume that $x^+\sim_{x_0}y^+$ and $x^-\sim_{x_0}y^-.$ By Lemma \ref{4}, we get that $x=x^+-x^-\sim_{x_0} y^+-y^-=y.$ \\
Next, (1) and (3) are equivalent follows from the fact that $\vert x-y\vert = x\vee y-x\wedge y$ by Theorem \ref{13}(3).\\
(1) implies (4): Assume that (1) is true that is $x\sim_{x_0}y~i.e.~\vert x-y\vert \perp \vert x_0\vert.$ Since $\mathbb{X}$ is a vector lattice, by Theorem \ref{13}(5), we have $\vert x\vee z - y\vee z\vert \leq \vert x-y\vert$ and $\vert x\wedge z - y\wedge z\vert \leq \vert x-y\vert$ for all $z\in \mathbb{X}.$ By definition \ref{30}(d), we get $\vert x\vee z - y\vee z\vert \perp \vert x_0\vert$ and $\vert x\wedge z - y\wedge z\vert \perp \vert x_0\vert$ so that $x\vee z \sim_{x_0} y\vee z$ and $x\wedge z \sim_{x_0} y\wedge z$ for all $z\in \mathbb{X}.$\\
(4) implies (2): Assume that (4) is true that is $x\vee z \sim_{x_0} y\vee z$ and $x\wedge z \sim_{x_0} y\wedge z$ for all $z\in \mathbb{X}.$ Put $z=0,$ we get $x\vee 0 \sim_{x_0} y\vee 0$ and $x\wedge 0 \sim_{x_0} y\wedge 0.$ Therefore $x^+\sim_{x_0}y^+$ and $x^-\sim_{x_0}y^-.$
\end{proof}

The following result describes some more properties of strong relation $\sim_{x_0}.$

\begin{proposition}
Let $\mathbb{X}$ be a vector lattice and $x_0,x$ and $y \in \mathbb{X}.$ 
\begin{enumerate}
\item[(1)] $x\sim_{x_0}y$ implies $\vert x\vert \sim_{x_0} \vert y\vert.$
\item[(2)] $\vert x\vert \sim_{x_0}\vert y\vert$ implies $\vert x+y\vert \wedge \vert x-y\vert\sim_{x_0}0.$ 
\item[(3)] $\vert x\vert \sim_{x_0}-\vert y\vert$ implies $\vert x+y\vert \vee \vert x-y\vert\sim_{x_0}0$ and hence $x\sim_{x_0}y$ and $x\sim_{x_0}-y.$
\item[(4)] $x\vee y\sim_{x_0} 0$ and $x\wedge y\sim_{x_0} 0$ implies $x\sim_{x_0}y.$
\item[(5)] $x\sim_{x_0}y$ and $x\sim_{x_0}-y$ implies $x, x\vee y, x\wedge y,\vert x\vert \vee \vert y\vert$ and $\vert x\vert \wedge \vert y\vert \sim_{x_0}0.$ 
\end{enumerate}
\end{proposition}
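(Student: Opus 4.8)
The plan is to translate each of the five claims into a statement about orthogonality $\perp$ on the positive cone and then feed it into the lattice identities of Theorem~\ref{13}, the hereditary property \ref{30}(d), the additivity Lemma~\ref{4}, the scaling Proposition~\ref{3}, and the characterization Proposition~\ref{8}. Throughout I would use the dictionary that, for any $u, v \in \mathbb{X}$, the relation $u \sim_{x_0} v$ unpacks to $\vert u - v\vert \perp \vert x_0\vert$, so that for a \emph{positive} element $w$ the statement $w \sim_{x_0} 0$ is literally $w \perp \vert x_0\vert$; I would also use freely that $\sim_{x_0}$ is an equivalence relation on the vector lattice $\mathbb{X}$.

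For (1), since $x \sim_{x_0} y$ gives $x^+ \sim_{x_0} y^+$ and $x^- \sim_{x_0} y^-$ by the equivalence (1)$\Leftrightarrow$(2) of Proposition~\ref{8}, Lemma~\ref{4} lets me add these to get $\vert x\vert = x^+ + x^- \sim_{x_0} y^+ + y^- = \vert y\vert$; (a quicker alternative is to dominate $\vert\,\vert x\vert - \vert y\vert\,\vert \le \vert x - y\vert$ by Theorem~\ref{13}(4) and apply \ref{30}(d)). For (2), Theorem~\ref{13}(8) gives $\vert x+y\vert \wedge \vert x-y\vert = \vert\,\vert x\vert - \vert y\vert\,\vert$, a positive element which by hypothesis is orthogonal to $\vert x_0\vert$, hence $\sim_{x_0} 0$. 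Part (3) is the mirror image via Theorem~\ref{13}(7): $\vert x+y\vert \vee \vert x-y\vert = \vert x\vert + \vert y\vert$, and $\vert x\vert \sim_{x_0} -\vert y\vert$ says exactly $\vert x\vert + \vert y\vert \perp \vert x_0\vert$, so $\vert x+y\vert \vee \vert x-y\vert \sim_{x_0} 0$; the concluding ``hence'' then follows because $0 \le \vert x - y\vert \le \vert x+y\vert \vee \vert x-y\vert$ and $0 \le \vert x+y\vert \le \vert x+y\vert \vee \vert x-y\vert$, so \ref{30}(d) yields $\vert x-y\vert \perp \vert x_0\vert$ and $\vert x+y\vert \perp \vert x_0\vert$, i.e.\ $x \sim_{x_0} y$ and $x \sim_{x_0} -y$.

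For (4), I would subtract via Lemma~\ref{4}: from $x\vee y \sim_{x_0} 0$ and $x\wedge y \sim_{x_0} 0$ we get $x\vee y - x\wedge y \sim_{x_0} 0$, and since $x\vee y - x\wedge y = \vert x-y\vert$ by Theorem~\ref{13}(3), this is $x \sim_{x_0} y$. Part (5) needs the most bookkeeping: first apply Lemma~\ref{4} to $x \sim_{x_0} y$ and $x \sim_{x_0} -y$ to obtain $0 = x - x \sim_{x_0} y - (-y) = 2y$, then Proposition~\ref{3}(1) with $\alpha = \tfrac12$ to get $y \sim_{x_0} 0$, and then transitivity (with $x \sim_{x_0} y$) to get $x \sim_{x_0} 0$; also $x \sim_{x_0} y$ is the same as $\vert x - y\vert \sim_{x_0} 0$. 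Now I would expand $x\vee y = \tfrac12(x + y + \vert x-y\vert)$ and $x\wedge y = \tfrac12(x + y - \vert x-y\vert)$ by Theorem~\ref{13}(1),(2) and apply Lemma~\ref{4} and Proposition~\ref{3} term by term to conclude $x\vee y \sim_{x_0} 0$ and $x\wedge y \sim_{x_0} 0$; part (1) gives $\vert x\vert \sim_{x_0} 0$ and $\vert y\vert \sim_{x_0} 0$, hence $\vert\,\vert x\vert - \vert y\vert\,\vert \sim_{x_0} 0$ by transitivity, and the same expansion applied to $\vert x\vert \vee \vert y\vert$ and $\vert x\vert \wedge \vert y\vert$ completes the list.

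I do not expect a genuine obstacle. The only care needed is (a) keeping the dictionary between $\sim_{x_0}$ and $\perp$ straight — especially that $u\sim_{x_0}v$ means $\vert u-v\vert\perp\vert x_0\vert$, and that $\sim_{x_0}$ is known to be transitive only because $\mathbb{X}$ is a vector lattice — and (b) making sure each invocation of \ref{30}(d) dominates by a genuinely positive element. The step most likely to demand attention is the opening of (5), where $y \sim_{x_0} 0$ has to be extracted via additivity and scaling before the lattice identities of Theorem~\ref{13} become applicable.
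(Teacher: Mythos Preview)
Your argument is correct, and for parts (1)--(4) it is essentially the paper's own proof: the same lattice identities from Theorem~\ref{13} combined with \ref{30}(d), Lemma~\ref{4}, and Proposition~\ref{3}.

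For part (5) you take a genuinely different route. The paper first obtains $x\sim_{x_0}0$ (as you do, via additivity and scaling), then works directly with $|x-y|\perp|x_0|$ and $|x+y|\perp|x_0|$: it applies \ref{30}(e) to get $\big|\,|x+y|\pm|x-y|\,\big|\perp|x_0|$, identifies these (via Theorem~\ref{13}(7),(8)) with $2(|x|\vee|y|)$ and $2(|x|\wedge|y|)$, and finally bounds $|x\vee y|$ and $|x\wedge y|$ from above and invokes \ref{30}(d). Your approach instead first secures $x,y,|x-y|\sim_{x_0}0$ and then assembles each of $x\vee y$, $x\wedge y$, $|x|\vee|y|$, $|x|\wedge|y|$ term-by-term from the half-sum formulas of Theorem~\ref{13}(1),(2) using only Lemma~\ref{4} and Proposition~\ref{3}. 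Your route is more uniform and, incidentally, sidesteps the paper's appeal to the inequality $|x\wedge y|\le |x|\wedge|y|$, which actually fails in general vector lattices (e.g.\ $x=-2$, $y=-1$ in $\mathbb{R}$); the paper's step is easily repaired by using $|x\wedge y|\le |x|\vee|y|$ instead, but your argument never needs such a bound.
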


\begin{proof} 
Let $x_0,x$ and $y \in \mathbb{X}.$ 
\begin{enumerate}
\item[(1)] Assume that $x\sim_{x_0}y$ so that $\vert x-y\vert \perp \vert x_0\vert.$ Since $\mathbb{X}$ is a vector lattice, by Theorem \ref{13}(4), we get that $\vert \vert x\vert-\vert y\vert\vert \leq \vert x-y\vert.$ Therefore $\vert \vert x\vert -\vert y\vert\vert \perp \vert x_0\vert$ so that $\vert x\vert \sim_{x_0} \vert y\vert.$
\item[(2)]  Assume that $\vert x\vert \sim_{x_0}\vert y\vert$ so that $\vert \vert x\vert -\vert y\vert \vert \perp \vert x_0\vert.$ As $\mathbb{X}$ is a vector lattice, again by Theorem \ref{13}(8), we have $\vert x+y\vert \wedge \vert x-y\vert = \vert \vert x\vert -\vert y\vert \vert$ so that $\vert x+y\vert \wedge \vert x-y\vert\sim_{x_0}0.$
\item[(3)] In a vector lattice $\mathbb{X},$ we have $\vert x+y\vert \vee \vert x-y\vert = \vert x\vert + \vert y\vert$ by Theorem \ref{13}(7). Then $\vert x\vert \sim_{x_0}-\vert y\vert$ implies $\vert x\vert + \vert y\vert \perp \vert{x_0}\vert$ so that $\vert x+y\vert \vee \vert x-y\vert \perp \vert x_0\vert~i.e.~\vert x+y\vert \vee \vert x-y\vert\sim_{x_0}0.$ Next, $\vert x-y\vert, \vert x+y\vert \leq \vert x+y\vert \vee \vert x-y\vert$ implies  $\vert x-y\vert \perp \vert x_0\vert$ and $\vert x+y\vert \perp \vert x_0\vert$ so that $x\sim_{x_0}y$ and $x\sim_{x_0}-y$ follows from definition \ref{30}(d).
\item[(4)] Since $\mathbb{X}$ is a vector lattice, applying Theorem \ref{13}(3), we have $\vert x-y\vert=x\vee y-x\wedge y.$ By  Lemma \ref{4}, $x\vee y\sim_{x_0} 0$ and $x\wedge y\sim_{x_0} 0$ implies $\vert x-y\vert=x\vee y-x\wedge y \sim_{x_0} 0~i.e.~x\sim_{x_0}y.$
\item[(5)] Let $x\sim_{x_0}y$ and $x\sim_{x_0}-y$ holds in $\mathbb{X}.$ By Lemma \ref{4}, we have $2x=x+x\sim_{x_0}y-y=0$ and by Proposition \ref{3}(3), we conclude that $x\sim_{x_0}0.$ In fact $x\sim_{x_0}y$ and $x\sim_{x_0}-y$ implies that $\vert x-y\vert \perp \vert x_0\vert$ and $\vert x+y\vert \perp \vert x_0\vert.$ In a vector lattice, by Theorem \ref{13}, the following results hold: $x\vee y=\frac{1}{2}(x+y+\vert x-y\vert), x\wedge y=\frac{1}{2}(x+y-\vert x-y\vert), \vert x\vert \vee \vert y\vert =\frac{1}{2}(\vert x+y\vert +\vert x-y\vert)$ and $\vert x\vert \wedge \vert y\vert =\frac{1}{2}(\vert \vert x+y\vert -\vert x-y\vert \vert).$ Using definition \ref{30}(e), we have $\vert \vert x+y\vert \pm \vert x-y\vert \vert \perp \vert x_0\vert$  so that $\vert x\vert \vee \vert y\vert \perp \vert x_0\vert$ and $\vert x\vert \wedge \vert y\vert \perp \vert x_0\vert$ and consequently $\vert x\vert \vee \vert y\vert \sim_{x_0} 0$ and $\vert x\vert \wedge \vert y\vert \sim_{x_0} 0.$ Since by Theorem \ref{2}, the Triangle inequality holds in a vector lattice, therefore we get that $\vert x\vee y\vert \leq \vert x\vert \vee \vert y\vert$ and $\vert x\wedge y\vert \leq \vert x\vert \wedge \vert y\vert$ so that $\vert x \vee y\vert \perp \vert x_0\vert$ and $\vert x \wedge y\vert \perp \vert x_0\vert.$ Thus $\vert x\vee y\vert \sim_{x_0}0$ and $\vert x\wedge y\vert \sim_{x_0}0.$
\end{enumerate}
\end{proof}

In an absolutely ordered space $\mathbb{X},$ we characterized all $x$ that are strongly related to 0 by $x$ itself in the following result.

\begin{corollary}\label{14}
Let  $\mathbb{X}$ be an absolutely ordered space and $x \in \mathbb{X}.$ Then $x=0$ if and only if $x\sim_x 0 .$
\end{corollary}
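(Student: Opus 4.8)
The plan is to show that the single condition $x\sim_x 0$ collapses to the equation $\vert x\vert = 0$, and then to identify $\vert x\vert = 0$ with $x = 0$. First I would unwind Definition \ref{6}: taking $x_0 = x$ and comparing $x$ with $0$, the relation $x\sim_x 0$ says precisely that $x - 0 = x \perp x$. By the convention for arbitrary elements in Definition \ref{10}(a), $x\perp x$ means $\vert x\vert \perp \vert x\vert$, and since $\vert x\vert \in \mathbb{X}^+$ this is the equality $\vert\, \vert x\vert - \vert x\vert\, \vert = \vert x\vert + \vert x\vert$. The left-hand side equals $\vert 0\vert$, which is $0$ by Definition \ref{30}(a) (as $0\in \mathbb{X}^+$), so $x\sim_x 0$ is equivalent to $2\vert x\vert = 0$, i.e.\ to $\vert x\vert = 0$; note that every one of these steps is an equivalence.

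It then remains to observe that $\vert x\vert = 0$ holds exactly when $x = 0$. If $x = 0$, then $\vert x\vert = 0$ by Definition \ref{30}(a) (or by \ref{30}(c) with $\alpha = 0$). Conversely, if $\vert x\vert = 0$, then Definition \ref{30}(b) gives $\vert x\vert + x = x \in \mathbb{X}^+$, whence $x = \vert x\vert = 0$ by Definition \ref{30}(a) again; alternatively one may note that $\pm x\in \mathbb{X}^+$ and invoke properness of $\mathbb{X}^+$ (Remark \ref{14}). Chaining the two equivalences yields $x = 0 \iff x\sim_x 0$, which is the assertion.

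There is no genuine obstacle here; the corollary is a direct unwinding of the definitions. The only points I would take care to state explicitly are that, for a possibly non-positive element, $\perp$ is to be interpreted through absolute values, and that the implication $\vert x\vert = 0 \Rightarrow x = 0$ rests on the axioms \ref{30}(a)--(b) (equivalently, on properness of the cone, Remark \ref{14}). I expect the written proof to occupy only a few lines.
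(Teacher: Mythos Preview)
Your proposal is correct and follows essentially the same line as the paper's own proof: unwind $x\sim_x 0$ to $\vert x\vert\perp\vert x\vert$, deduce $2\vert x\vert=0$ from the orthogonality identity, and then pass from $\vert x\vert=0$ to $x=0$ via Definition~\ref{30}(b) together with properness of $\mathbb{X}^+$ (Remark~\ref{14}). The only cosmetic difference is that the paper treats the converse in a single word (``trivially''), whereas you spell out both directions as equivalences.
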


\begin{proof}
First assume that $x\sim_x 0.$ Then $\vert x\vert \perp \vert x\vert.$ Thus $2\vert x\vert = \vert x\vert +\vert x\vert =\vert \vert x\vert -\vert x\vert \vert=0$ so that $\vert x\vert =0.$ By definition \ref{30}(b), $\vert x\vert \pm x\in \mathbb{X}^+$ implies that $\pm x\in \mathbb{X}^+.$ Since $\mathbb{X}^+$ is proper by \ref{14}, we have $x=0.$ Next, converse part follows trivially.
\end{proof}

Next, we prove a characterization of projections in absolute order unit spaces in terms of $\sim_{x_0}$ which follows trivially. For the difinition of order projection, we refer to see \cite[Definition 5.2]{K18}.

\begin{corollary}
Let $\mathbb{X}$ be an absolute order unit space and $p\in \mathbb{X}^+$ such that $\Vert p\Vert\leq 1.$ Then the following statements are equivalent:
\begin{enumerate}
\item[(1)] $p$ is an order projection.
\item[(2)] $e-p\sim_{p} 0.$
\item[(3)] $p\sim_{e-p} 0.$
\end{enumerate}
\end{corollary}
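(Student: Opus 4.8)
The plan is to unwind all three conditions into the single relation $p \perp (e-p)$, after which their equivalence is immediate.

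I would first record the standing observation that $e-p \in \mathbb{X}^+$. Since $\Vert p\Vert \le 1$, the definition of the order unit norm gives $\Vert p\Vert\,e - p \in \mathbb{X}^+$, and clearly $(1-\Vert p\Vert)e \in \mathbb{X}^+$; adding, $e - p = (1-\Vert p\Vert)e + (\Vert p\Vert\,e - p) \in \mathbb{X}^+$, so in particular $0 \le p \le e$. Consequently, by Definition \ref{30}(a), $\vert p\vert = p$ and $\vert e-p\vert = e-p$, while $\vert 0\vert = 0$.

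Next I would translate statements (2) and (3) using Definition \ref{6} and Definition \ref{10}(a). By Definition \ref{6}, $e-p \sim_p 0$ means $(e-p) - 0 \perp p$; since both $e-p$ and $p$ lie in $\mathbb{X}^+$, the (general) orthogonality of Definition \ref{10}(a) reduces to $\vert (e-p) - p\vert = (e-p) + p = e$, i.e. $(e-p) \perp p$ in $\mathbb{X}^+$. In the same way, $p \sim_{e-p} 0$ means $p - 0 \perp (e-p)$, i.e. $\vert p - (e-p)\vert = e$. Because $p - (e-p) = (-1)\big((e-p) - p\big)$, Definition \ref{30}(c) gives $\vert p - (e-p)\vert = \vert (e-p) - p\vert$, so (2) and (3) are verbatim the same statement, namely $\vert 2p - e\vert = e$, equivalently $p \perp (e-p)$.

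Finally I would compare with (1). By the definition of an order projection \cite[Definition 5.2]{K18} in an absolute order unit space --- recalling that $\perp = \perp^a_\infty$ on $\mathbb{X}^+$ --- an element $p$ with $0 \le p \le e$ is an order projection precisely when $p \perp (e-p)$, and $0 \le p \le e$ has already been verified from $p \in \mathbb{X}^+$ and $\Vert p\Vert \le 1$. Hence (1) is also equivalent to $p \perp (e-p)$, and chaining the equivalences completes the proof. There is essentially no obstacle: the only step needing a word of justification is that $\Vert p\Vert \le 1$ forces $e - p \in \mathbb{X}^+$, and after that the argument is a direct substitution into the definitions of $\sim_{x_0}$ and of orthogonality, which is exactly why the corollary is stated as following trivially.
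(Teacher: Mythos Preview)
Your proposal is correct and is precisely the argument the paper has in mind: the corollary is stated as following trivially, and your unwinding of Definitions~\ref{6} and~\ref{10}(a) together with the cited definition of an order projection shows that (1), (2), and (3) each amount to the single condition $p\perp(e-p)$. The only content, as you note, is that $\Vert p\Vert\le 1$ forces $0\le p\le e$, after which everything is a direct substitution.
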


Given an ordered normed space $(\mathbb{X}, \Vert \cdot\Vert)$, we denote the set of all the bounded linear functionals on $X^*.$  Now, we define a weak relation for the functions valued in an ordered normed space through a functional $x^*\in X^*.$

\begin{definition}\label{7}
Let $\mathbb{X}$ be a normed space and $f_1,f_2:[a,b]\to \mathbb{X}$ be two functions, and $x^*\in \mathbb{X}^*\setminus \lbrace 0 \rbrace.$ We say that $f_1$ is weakly related to $f_2$ by $x^*$  if $x^*\circ(f_1-f_2)=0~i.e.~x^*(f_1(t))=x^*(f_2(t))$ for all $t\in [a,b].$ If $f_1$ and $f_2$ are related weakly by $x^*,$ we denote it by $f_1 \sim_{x^*} f_2.$
\end{definition}

Note that $f_1 \sim_{x^*} f_2$ implies $f_2 \sim_{x^*} f_1.$ Therefore instead of saying that $f_1$ is weakly related to $f_2$ by $x^*,$ we say that $f_1$ and $f_2$ are weakly related by $x^*.$

In a normed space $\mathbb{X}$ and $\mathbb{A}\subseteq \mathbb{X},$ we denote the closure of $\mathbb{A}$ in $\mathbb{X}$ by $\overline{\mathbb{A}}$ and the linear space of  $\mathbb{A}$ in $\mathbb{X}$ by $Span (\mathbb{A}).$

The weak relation $\sim_{x^*}$ is related to the strong relation $\sim_{x_0}$ by the following characterization for $\sim_{x^*}.$  

\begin{proposition}\label{15}
Let $\mathbb{X}$ be a normed space and $f_1,f_2:[a,b]\to \mathbb{X}$ be two functions, and $x^*\in \mathbb{X}^*\setminus \lbrace 0\rbrace.$ Then the following statements are equivalent:
\begin{enumerate}
\item[(1)] $f_1 \sim_{x^*} f_2$ 
\item[(2)] $x^*\circ (f_1-f_2)(t)\sim_{x^*\circ (f_1-f_2)(t)} 0$ for all $t\in [a,b].$
\item[(3)] Range$(f_1-f_2)\subseteq Null(x^*).$ 
\item[(4)] $Span (Range(f_1-f_2))\subseteq Null(x^*).$
\item[(5)] $\overline{Span (Range(f_1-f_2))}\subseteq Null(x^*).$
\end{enumerate}
\end{proposition}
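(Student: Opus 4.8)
The plan is to prove the cycle $(1)\Rightarrow(2)\Rightarrow(1)$, the equivalence $(1)\Leftrightarrow(3)$, and then close the loop $(3)\Rightarrow(5)\Rightarrow(4)\Rightarrow(3)$; none of the individual steps is deep, so I would organize the argument around the only two facts that are actually used, namely Corollary \ref{14} and the closedness of $Null(x^*)$.

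First I would treat $(1)\Leftrightarrow(2)$. Fix $t\in[a,b]$ and set $r_t:=x^*\circ(f_1-f_2)(t)\in\mathbb{R}$. Since $\mathbb{R}$ with the usual absolute value is a vector lattice, it is in particular an absolutely ordered space, so Corollary \ref{14} applies and yields $r_t=0$ if and only if $r_t\sim_{r_t}0$. Quantifying over $t\in[a,b]$, the condition $x^*\circ(f_1-f_2)=0$ — which is exactly the meaning of $f_1\sim_{x^*}f_2$ by Definition \ref{7} — is equivalent to statement (2). Next, $(1)\Leftrightarrow(3)$ is just unwinding definitions: $f_1\sim_{x^*}f_2$ says $x^*((f_1-f_2)(t))=0$, i.e. $(f_1-f_2)(t)\in Null(x^*)$, for every $t\in[a,b]$; the set of these values is precisely $Range(f_1-f_2)$, so the condition reads $Range(f_1-f_2)\subseteq Null(x^*)$, which is (3).

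For the remaining equivalences I would use that $Null(x^*)$ is a linear subspace of $\mathbb{X}$ (because $x^*$ is linear) which is moreover closed (because $x^*\in\mathbb{X}^*$ is bounded, hence continuous). From the inclusions $Range(f_1-f_2)\subseteq Span(Range(f_1-f_2))\subseteq\overline{Span(Range(f_1-f_2))}$ the implications $(5)\Rightarrow(4)\Rightarrow(3)$ are immediate. Conversely, if (3) holds, then $Null(x^*)$, being a subspace containing $Range(f_1-f_2)$, must contain $Span(Range(f_1-f_2))$, and being closed it must then contain $\overline{Span(Range(f_1-f_2))}$, which is (5). Chaining all of this together shows $(1)$ through $(5)$ are equivalent.

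I do not expect any real obstacle: the argument is almost entirely formal. The only two points where something external is invoked are the appeal to Corollary \ref{14} (which just requires observing that $\mathbb{R}$ is an absolutely ordered space) and the closedness of $Null(x^*)$, which is the sole place where boundedness of $x^*$ enters. The hypothesis $x^*\neq 0$ is not used in any of the implications; it is present only so that the weak relation $\sim_{x^*}$ is nondegenerate.
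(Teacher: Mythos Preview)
Your proposal is correct and follows essentially the same approach as the paper: both use Corollary~\ref{14} for $(1)\Leftrightarrow(2)$, a direct unwinding of Definition~\ref{7} for $(1)\Leftrightarrow(3)$, and the fact that $Null(x^*)$ is a closed linear subspace for the equivalence of $(3)$, $(4)$, and $(5)$. The only cosmetic difference is that the paper proves the pairwise equivalences $(3)\Leftrightarrow(4)$ and $(4)\Leftrightarrow(5)$ separately, whereas you close the loop $(3)\Rightarrow(5)\Rightarrow(4)\Rightarrow(3)$ in one pass; the content is identical.
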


\begin{proof}
(1) and (2) are equivalent: $f_1 \sim_{x^*} f_2$ if and only if $x^*\circ (f_1-f_2)(t)=0$ for all $t\in [a,b].$ In the light of the Corollary \ref{14}, we have $x^*\circ (f_1-f_2)(t)=0$ if and only if $x^*\circ (f_1-f_2)(t)\sim_{x^*\circ (f_1-f_2)(t)} 0.$ Therefore $f_1 \sim_{x^*} f_2$ if and only if $x^*\circ (f_1-f_2)(t)\sim_{x^*\circ (f_1-f_2)(t)} 0$ for all $t\in [a,b].$

(1) and (3) are equivalent: $f_1 \sim_{x^*} f_2$ if and only if $x^*\circ(f_1-f_2)=0~i.e.~x^*((f_1-f_2)(t))$ for all $t\in [a,b]$ if and only if $\lbrace (f_1-f_2)(t):t\in [0,1]\rbrace\subseteq Null(x^*)$ if and only if $Range(f_1-f_2)\subseteq Null(x^*).$ 

(3) and (4) are equivalent: Assume that (3) is true. Let $y\in Span (Range(f_1-f_2)).$ There exist $x_i\in Range(f_1-f_2)$ and $\delta_i\in \mathbb{R}$ for $i=1,2,\cdots, n$ such that $y=\displaystyle \sum_{i=1}^n \delta_i x_i.$ Since $x_i\in Null(x^*),$ we get that $x^*(x_i)=0$ for all $i.$ Then $x^*(y)=\displaystyle\sum_{i=1}^n \delta_i x^*(x_i)=0$ so that $y\in Null(x^*).$ Thus $Span (Range(f_1-f_2))\subseteq Null(x^*)$ and consequently (4) follows. Conversely, (4) implies (3) is trivial to verify.  

(4) and (5) are equivalent: Assume that (4) holds. For bounded linear functional $x^*, $ the space $Null(x^*)$ is always norm-closed. Therefore $\overline{Null(x^*)}=Null(x^*).$ Then $Span (Range(f_1-f_2))\subseteq Null(x^*)$ implies $\overline{Span (Range(f_1-f_2))}\subseteq Null(x^*).$ Thus $\overline{Span (Range(f_1-f_2))}\subseteq Null(x^*)$ and consequently (5) follows. Conversely, (5) implies (4) is followed immediately from the fact $Range(f_1-f_2)\subseteq \overline{Range(f_1-f_2)}.$
\end{proof}

\begin{corollary}\label{16}
Let $\mathbb{X}$ be a normed space and $f_1,f_2:[a,b]\to \mathbb{X}$ be two functions, and $x^*\in \mathbb{X}^*\setminus \lbrace 0\rbrace.$ If $f_1 \sim_{x^*} f_2,$ then $\overline{Span(Range(f_1-f_2))}\neq \mathbb{X}.$ Conversely, if  $\overline{Span(Range(f_1-f_2))}\neq \mathbb{X},$ then there exists $y^*\in \mathbb{X}^*\setminus \lbrace 0\rbrace$ such that $f_1 \sim_{y^*} f_2.$
\end{corollary}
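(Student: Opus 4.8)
The plan is to deduce both directions from Proposition \ref{15}. For the forward direction, suppose $f_1 \sim_{x^*} f_2$. By the equivalence of (1) and (5) in Proposition \ref{15}, we have $\overline{Span(Range(f_1-f_2))}\subseteq Null(x^*)$. Since $x^*\in \mathbb{X}^*\setminus\lbrace 0\rbrace$, there exists some $z\in \mathbb{X}$ with $x^*(z)\neq 0$, so $z\notin Null(x^*)$; hence $Null(x^*)\neq \mathbb{X}$, and therefore $\overline{Span(Range(f_1-f_2))}\neq \mathbb{X}$. This direction is essentially immediate once Proposition \ref{15} is in hand.

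For the converse, suppose $\overline{Span(Range(f_1-f_2))}\neq \mathbb{X}$. The idea is to invoke a Hahn–Banach separation/extension consequence: if $M:=\overline{Span(Range(f_1-f_2))}$ is a proper closed subspace of the normed space $\mathbb{X}$, then there exists $y^*\in \mathbb{X}^*$ with $y^*\neq 0$ and $y^*|_M = 0$. Concretely, pick $x_0\in \mathbb{X}\setminus M$; since $M$ is closed, $\operatorname{dist}(x_0,M)>0$, and the standard corollary of Hahn–Banach produces a norm-one functional $y^*$ vanishing on $M$ with $y^*(x_0)=\operatorname{dist}(x_0,M)\neq 0$. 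In particular $Range(f_1-f_2)\subseteq M \subseteq Null(y^*)$, so by the equivalence of (1) and (3) in Proposition \ref{15} (applied with $y^*$ in place of $x^*$), we conclude $f_1 \sim_{y^*} f_2$.

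I do not anticipate a genuine obstacle here; the only point requiring a little care is that the converse uses the Hahn–Banach theorem (the existence of a nonzero functional annihilating a proper closed subspace), which is the standard tool and is already the methodological backbone of this section of the paper. One should also note that the hypothesis $f_1\sim_{x^*}f_2$ for \emph{some} nonzero $x^*$ is exactly what is being characterized, so the statement is really: $f_1$ and $f_2$ are weakly related by some nonzero functional if and only if the closed span of the range of their difference is a proper subspace of $\mathbb{X}$. The forward direction is the easy ``only if'' packaged through Proposition \ref{15}(5), and the converse is the ``if'' packaged through Hahn–Banach and Proposition \ref{15}(3).
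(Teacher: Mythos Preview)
Your proposal is correct and follows essentially the same approach as the paper: both directions rely on Proposition \ref{15} (the inclusion $\overline{Span(Range(f_1-f_2))}\subseteq Null(x^*)$ for the forward implication), and the converse is obtained via the standard Hahn--Banach consequence that a proper closed subspace admits a nonzero annihilating functional. The only cosmetic difference is that the paper phrases the converse directly as $y^*\circ(f_1-f_2)=0$ rather than routing back through Proposition \ref{15}(3), but this is the same argument.
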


\begin{proof}
First, assume that $f_1 \sim_{x^*} f_2.$ Using the Proposition \ref{15}, we get that $\overline{Span (Range(f_1-f_2))}\subseteq Null(x^*).$ Since $x^*\neq 0,$ we conclude that $Null (x^*)\neq \mathbb{X}.$ Thus $\overline{Span(Range(f_1-f_2))}\neq \mathbb{X}.$ Conversely, assume that $\overline{Span(Range(f_1-f_2))}$ $\neq \mathbb{X}.$ Put $\mathbb{Y}=\overline{Span(Range(f_1-f_2))}\neq \mathbb{X}.$ Then $\mathbb{Y}$ is a closed norm subspace of $\mathbb{X}.$ As $\mathbb{Y}\neq \mathbb{X},$ by an application of Hahn-Banach extension theorem, there exists $y^*\in \mathbb{X}^*$ such that $\Vert y^*\Vert=1$ and $y^*(y)=0$ for all $y\in \mathbb{Y}.$ Since $(f_1-f_2)(t)\in \mathbb{Y},$ we get $y^*((f_1-f_2)(t))=$ for all $t\in [a,b].$ Thus $y^*\circ (f_1-f_2)=0.$ Hence $f_1 \sim_{y^*} f_2.$ 
\end{proof}

\begin{corollary}
Let $\mathbb{X}$ be a normed space and $f_1,f_2:[a,b]\to \mathbb{X}$ be two functions. If $Span(Range(f_1-f_2))$ is dense in $\mathbb{X},$ then $f_1$ is not weakly related to $f_2$ by any $x^*\in \mathbb{X}^*\setminus \lbrace 0\rbrace.$
\end{corollary}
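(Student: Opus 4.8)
The plan is to argue by contradiction, leaning entirely on Corollary \ref{16}, whose proof already absorbs the only analytic ingredient (the Hahn--Banach separation argument). Suppose, contrary to the assertion, that there is some $x^*\in \mathbb{X}^*\setminus\lbrace 0\rbrace$ with $f_1\sim_{x^*}f_2$. Apply the first half of Corollary \ref{16}: weak relatedness by a non-zero functional forces $\overline{Span(Range(f_1-f_2))}\neq \mathbb{X}$. On the other hand, the hypothesis that $Span(Range(f_1-f_2))$ is dense in $\mathbb{X}$ says exactly that $\overline{Span(Range(f_1-f_2))}=\mathbb{X}$. This contradiction shows no such $x^*$ exists, i.e. $f_1$ is not weakly related to $f_2$ by any $x^*\in \mathbb{X}^*\setminus\lbrace 0\rbrace$.

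If one prefers not to invoke Corollary \ref{16} as a black box, the same conclusion unwinds directly from Proposition \ref{15}. Assuming $f_1\sim_{x^*}f_2$, equivalence of (1) and (5) there gives $\overline{Span(Range(f_1-f_2))}\subseteq Null(x^*)$. Since $x^*\neq 0$, the kernel $Null(x^*)$ is a proper (closed) subspace of $\mathbb{X}$, so $\overline{Span(Range(f_1-f_2))}\neq \mathbb{X}$, contradicting density. Either route is a one-step deduction.

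There is no genuine obstacle in this proof: it is purely a restatement of Corollary \ref{16} in contrapositive form, specialised to the case where the closed span is all of $\mathbb{X}$. The only point worth stating explicitly is that ``$Span(Range(f_1-f_2))$ dense in $\mathbb{X}$'' and ``$\overline{Span(Range(f_1-f_2))}=\mathbb{X}$'' are literally the same condition, which is what makes the hypothesis of the corollary line up with the negation of the conclusion of Corollary \ref{16}.
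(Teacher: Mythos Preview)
Your proof is correct and matches the paper's argument essentially line for line: assume for contradiction that $f_1\sim_{x^*}f_2$ for some nonzero $x^*$, invoke the first half of Corollary~\ref{16} to get $\overline{Span(Range(f_1-f_2))}\neq\mathbb{X}$, and contradict the density hypothesis. The alternative unwinding via Proposition~\ref{15} you mention is also fine but is not needed.
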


\begin{proof}
Let $Span(Range(f_1-f_2))$ be dense in $\mathbb{X}.$ Then $\overline{Span(Range(f_1-f_2))}=\mathbb{X}.$ If possible suppose that $f_1\sim_{x^*} f_2$ for some $x^*\setminus \lbrace 0\rbrace.$ By Corollary \ref{16}, we get that $\overline{Span(Range(f_1-f_2))}\neq \mathbb{X}$ which is a contradiction. Hence $f_1$ is not weakly related to $f_2$ by any $x^*\in \mathbb{X}^*\setminus \lbrace 0\rbrace.$
\end{proof}

The next result provides a characterization for the functions $f_1$ and $f_2$ weakly related by every bounded linear functional on  $\mathbb{X}~i.e.~ f_1\sim_{x^*}f_2$ for all $x^*\in \mathbb{X}^*\setminus \lbrace 0\rbrace.$ 

\begin{proposition}\label{9}
Let $\mathbb{X}$ be a normed space and $f_1,f_2:[a,b]\to \mathbb{X}$ be two functions. If $f_1\sim_{x^*}f_2$ for all $x^*\in X^*\setminus \lbrace 0\rbrace,$ then $f_1=f_2.$
\end{proposition}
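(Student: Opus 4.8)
The statement is essentially the classical fact that the bounded linear functionals on a normed space separate points, applied pointwise along the interval $[a,b]$. The plan is to fix an arbitrary $t\in[a,b]$ and show $f_1(t)=f_2(t)$; since $t$ is arbitrary this gives $f_1=f_2$. Set $x:=f_1(t)-f_2(t)\in\mathbb{X}$ and suppose toward a contradiction that $x\neq 0$.

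First I would invoke the Hahn--Banach theorem in its corollary form: for a nonzero vector $x$ in a normed space there exists $x^*\in\mathbb{X}^*$ with $\Vert x^*\Vert=1$ and $x^*(x)=\Vert x\Vert\neq 0$; in particular $x^*\neq 0$, so $x^*\in\mathbb{X}^*\setminus\lbrace 0\rbrace$. This is the same tool already used in the proof of Corollary~\ref{16}, so it is consistent with the paper's toolkit. By hypothesis $f_1\sim_{x^*}f_2$, which by Definition~\ref{7} means $x^*\bigl(f_1(s)\bigr)=x^*\bigl(f_2(s)\bigr)$ for all $s\in[a,b]$; taking $s=t$ yields $x^*(x)=x^*\bigl(f_1(t)-f_2(t)\bigr)=0$, contradicting $x^*(x)=\Vert x\Vert\neq 0$. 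Hence $x=0$, i.e. $f_1(t)=f_2(t)$, and since $t\in[a,b]$ was arbitrary, $f_1=f_2$.

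Alternatively, one can route the argument through Corollary~\ref{16} directly: if $f_1=f_2$ fails, then $Range(f_1-f_2)$ contains a nonzero vector, so $\overline{Span(Range(f_1-f_2))}\neq\lbrace 0\rbrace$; but one still needs this span to be a proper subspace of $\mathbb{X}$ to apply Corollary~\ref{16}, which is automatic only when $\mathbb{X}\neq\lbrace 0\rbrace$ is not spanned by the range — so the cleaner path is the pointwise Hahn--Banach argument above, which avoids any density bookkeeping.

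There is essentially no obstacle here; the only point requiring a word of care is the degenerate case where there are \emph{no} nonzero bounded functionals, which forces the hypothesis to be vacuous. If $\mathbb{X}=\lbrace 0\rbrace$ then $f_1=f_2=0$ trivially; if $\mathbb{X}\neq\lbrace 0\rbrace$ then Hahn--Banach guarantees $\mathbb{X}^*\setminus\lbrace 0\rbrace\neq\emptyset$, so the hypothesis "$f_1\sim_{x^*}f_2$ for all $x^*\in\mathbb{X}^*\setminus\lbrace 0\rbrace$" is a genuine condition and the contradiction argument goes through. I would therefore phrase the proof as: assume $f_1\neq f_2$, pick $t$ with $f_1(t)\neq f_2(t)$, separate $f_1(t)-f_2(t)$ from $0$ by a norm-one functional, and derive the contradiction.
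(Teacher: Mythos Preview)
Your proof is correct and follows essentially the same route as the paper: fix $t\in[a,b]$ and use the Hahn--Banach separation of points to conclude $f_1(t)=f_2(t)$. The paper phrases this via the duality formula $\Vert f_1(t)-f_2(t)\Vert=\sup\lbrace \vert x^*(f_1(t)-f_2(t))\vert : \Vert x^*\Vert=1\rbrace=0$ rather than by contradiction with a single separating functional, but this is the same idea packaged slightly differently.
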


\begin{proof}
 Let $t\in [a,b].$ Assume that $f_1\sim_{x^*}f_2$ for all $x^*\in X^*\setminus \lbrace 0\rbrace$ so that $x^*(f_1(t)-f_2(t))=0$ for all $x^*\in X^*.$ Then $\Vert f_1(t)-f_2(t)\Vert=\sup \lbrace \vert x^*(f_1(t)-f_2(t))\vert :x^*\in X^*, \Vert x^*\Vert = 1\rbrace=0.$ Thus $f_1(t)=f_2(t)$ for all $t\in [a,b]$ and consequently $f_1=f_2.$   
\end{proof}

For fix $x^*\in X^*,$ the weak relation $\sim_{x^*}$ is an equivalence relation on the set of all the functions defined on the closed interval $[a,b].$ 

\begin{proposition}\label{11}
Let $\mathbb{X}$ be a normed space and $f_1,f_2:[a,b]\to \mathbb{X}$ be two functions. Put $\mathcal{F}=\lbrace f:[a,b]\to \mathbb{X}~\textit{is a function}\rbrace.$ Then for a fixed $x^*\in X^*\setminus \lbrace 0\rbrace,$ the weak relation $\sim_{x^*}$ is an equivalence relation on $\mathcal{F}.$
\end{proposition}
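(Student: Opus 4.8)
The plan is to verify directly the three defining properties of an equivalence relation---reflexivity, symmetry, and transitivity---on $\mathcal{F}$, using nothing more than the pointwise linearity of the fixed functional $x^*$. If desired one could instead phrase everything through the characterization of Proposition \ref{15}, namely that $f_1\sim_{x^*}f_2$ if and only if $Range(f_1-f_2)\subseteq Null(x^*)$, but the bare Definition \ref{7} already suffices.

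First, for reflexivity: given any $f\in\mathcal{F}$, the function $f-f$ is identically $0$ on $[a,b]$, so $x^*\circ(f-f)\equiv x^*(0)=0$, whence $f\sim_{x^*}f$. For symmetry, suppose $f_1\sim_{x^*}f_2$, i.e. $x^*(f_1(t))=x^*(f_2(t))$ for all $t\in[a,b]$; then trivially $x^*(f_2(t))=x^*(f_1(t))$ for every $t$, so $f_2\sim_{x^*}f_1$. (This symmetry was already recorded immediately after Definition \ref{7}.)

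The only step carrying any content is transitivity. Suppose $f_1\sim_{x^*}f_2$ and $f_2\sim_{x^*}f_3$. Fix $t\in[a,b]$. Then $(f_1-f_3)(t)=(f_1-f_2)(t)+(f_2-f_3)(t)$, and since $x^*$ is linear,
\[
x^*\big((f_1-f_3)(t)\big)=x^*\big((f_1-f_2)(t)\big)+x^*\big((f_2-f_3)(t)\big)=0+0=0 .
\]
As $t$ was arbitrary, $x^*\circ(f_1-f_3)=0$, so $f_1\sim_{x^*}f_3$. Having checked all three axioms, we conclude that $\sim_{x^*}$ is an equivalence relation on $\mathcal{F}$.

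I do not anticipate a genuine obstacle here: the statement is essentially bookkeeping, and the ``hard part''---to the extent there is one---is merely ensuring that the pointwise linearity of $x^*$ is invoked correctly (equivalently, that $Null(x^*)$ is a linear subspace, which delivers the closure under the relevant sum used in transitivity). The substance of this section lies in Proposition \ref{15} and in the Jordan decomposition developed afterwards, not in this proposition.
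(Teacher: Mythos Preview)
Your proof is correct and follows essentially the same approach as the paper: both verify reflexivity and symmetry as trivial, then establish transitivity by a direct pointwise computation using the linearity of $x^*$. The only cosmetic difference is that the paper chains the equalities $x^*(f_1(t))=x^*(f_2(t))=x^*(f_3(t))$ whereas you sum two zeros via the decomposition $(f_1-f_3)(t)=(f_1-f_2)(t)+(f_2-f_3)(t)$; these are the same argument.
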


\begin{proof}
Reflexivity and symmetricity of $\sim_{x^*}$ on $\mathcal{F}$ are trivial to verify. Let $f_1,f_2$ and $f_3\in \mathcal{F}$ such that $f_1\sim_{x^*}f_2$ and $f_2\sim_{x^*}f_3.$ By $f_1\sim_{x^*}f_2$ and $f_2\sim_{x^*}f_3,$ we get that $x^*(f_1(t))=x^*(f_2(t))$ and $x^*(f_2(t))=x^*(f_3(t))$ respectively for all $t\in [a,b].$ Then $x^*(f_1(t))=x^*(f_3(t))$ for all $t\in [a,b]$ so that $f_1\sim_{x^*}f_3.$ Thus $\sim_{x^*}$ is a transitive relation on $\mathcal{F}.$ Hence $\sim_{x^*}$ is an equivalence relation on $\mathcal{F}.$
\end{proof}

\section{Extensible cones and Weak type Jordan decomposition theorem}

We begin this section by recalling the definition of a extensible cone in an ordered normed space. In fact, we are preparing a background tor prove a Jordan decomposition theorem for functions of weakly bounded variation. 

\begin{definition}
Let $(\mathbb{X}, \Vert \cdot \Vert)$ be a normed space and $\mathbb{X}^+$ be a cone in $\mathbb{X}.$ Then $\mathbb{X}^+$ is called an extensible cone if there exists a cone $\mathbb{X}_+$ in $\mathbb{X}$ and $\alpha>0$ such that $B(x,\alpha \Vert x\Vert) \subseteq  \mathbb{X}_+$ for any $x\in \mathbb{X}^+.$
\end{definition}

\begin{remark}
If $\mathbb{X}^+$ is an extensible cone in a normed space $(\mathbb{X}, \Vert \cdot \Vert),$ then $\mathbb{X}^+$ is norm-closed. To see this, let $\lbrace x_n \rbrace$ be a sequence in $\mathbb{X}^+$ such that $x_n \to y$ in $\mathbb{X}.$  Then $x^*(x_n)\geq \alpha \Vert x_n\Vert$ for all $n\in \mathbb{N}.$ Since $x^*(x_n)\to x^*(y), \Vert x_n\Vert \to \Vert y\Vert,$ letting $n\to \infty$ in $x^*(x_n)\geq \alpha \Vert x_n\Vert,$ we get that $x^*(y)\geq \alpha \Vert y\Vert.$ Thus $y\in \mathbb{X}^+$ so that $\mathbb{X}^+$ is norm-closed.   
\end{remark}

Next, we recall a characterization for a cone to be an extensible cone in terms of bounded linear functionals. This characterization is very crucial for proving subsequent results.

\begin{theorem}\label{k}
Let $(\mathbb{X}, \Vert \cdot \Vert)$ be a normed space and $\mathbb{X}^+$ be a cone in $\mathbb{X}.$ Then $\mathbb{X}^+$ is extensible if and only if there exist $x^*\in \mathbb{X}^*$ and a constant $\alpha >0$ such that $x^*(x)\geq \alpha \Vert x\Vert$ for all $x\in \mathbb{X}^+.$    
\end{theorem}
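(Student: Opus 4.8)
The plan is to establish the two implications separately, the converse by an explicit construction and the direct implication by a Hahn--Banach separation argument.

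For the converse, suppose $x^{*}\in\mathbb{X}^{*}$ and $\alpha>0$ satisfy $x^{*}(x)\ge\alpha\|x\|$ for all $x\in\mathbb{X}^{+}$. The case $\mathbb{X}^{+}=\{0\}$ is trivial, so I would assume $\mathbb{X}^{+}\neq\{0\}$, note that then $x^{*}\neq 0$, and after dividing $x^{*}$ and $\alpha$ by $\|x^{*}\|$ assume $\|x^{*}\|=1$. The candidate cone is $\mathbb{X}_{+}:=\{y\in\mathbb{X}:x^{*}(y)\ge\beta\|y\|\}$ for a small $\beta\in(0,\alpha)$ to be fixed; subadditivity of the norm shows $\mathbb{X}_{+}$ is a cone (indeed a proper one). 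For $x\in\mathbb{X}^{+}$ with $\|x\|=1$ and $y\in B(x,\alpha/2)$ one estimates $x^{*}(y)\ge x^{*}(x)-\|y-x\|\ge\alpha/2$ while $\|y\|\le 1+\alpha/2$, so $y\in\mathbb{X}_{+}$ as soon as $\beta(1+\alpha/2)\le\alpha/2$; fixing such a $\beta$ and rescaling by $\|x\|$ gives $B(x,(\alpha/2)\|x\|)\subseteq\mathbb{X}_{+}$ for every $x\in\mathbb{X}^{+}$, so $\mathbb{X}^{+}$ is extensible.

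For the direct implication, let $\mathbb{X}_{+}$ and $\alpha>0$ witness extensibility; again I would set aside the trivial case $\mathbb{X}^{+}=\{0\}$ and fix a unit vector $z_{0}\in\mathbb{X}^{+}$. Homogeneity of the inclusion $B(x,\alpha\|x\|)\subseteq\mathbb{X}_{+}$ yields $B(z_{0},\alpha)\subseteq\mathbb{X}_{+}$, so the convex cone $\mathbb{X}_{+}$ has nonempty interior; since $\mathbb{X}_{+}$ is proper it contains no neighbourhood of the origin, hence $0\notin\operatorname{int}(\mathbb{X}_{+})$. Applying the geometric Hahn--Banach theorem to separate the point $0$ from the open convex set $\operatorname{int}(\mathbb{X}_{+})$ produces $x^{*}\in\mathbb{X}^{*}\setminus\{0\}$ with $x^{*}(y)\ge x^{*}(0)=0$ for all $y\in\operatorname{int}(\mathbb{X}_{+})$; since a convex set with nonempty interior lies in the closure of its interior, $x^{*}(y)\ge 0$ for all $y\in\mathbb{X}_{+}$, and I would normalise $\|x^{*}\|=1$.

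It remains to read off the quantitative bound. For any unit $z\in\mathbb{X}^{+}$, the inclusion $B(z,\alpha)\subseteq\mathbb{X}_{+}$ gives $x^{*}(z+b)\ge 0$ whenever $\|b\|<\alpha$, hence $x^{*}(z)\ge\sup\{-x^{*}(b):\|b\|<\alpha\}=\alpha\|x^{*}\|=\alpha$; by homogeneity $x^{*}(x)\ge\alpha\|x\|$ for all $x\in\mathbb{X}^{+}$, which is exactly what is required (with the same constant $\alpha$). I expect the main obstacle to be this direction: one has to recognise that extensibility simultaneously forces $\mathbb{X}_{+}$ to have interior and prevents it from being a neighbourhood of $0$, so that a separating functional exists, and then observe that the balls of radius $\alpha$ surrounding the positive unit sphere are precisely what upgrades ``$x^{*}\ge 0$ on $\mathbb{X}_{+}$'' to the uniform estimate ``$x^{*}(z)\ge\alpha$ on the positive unit sphere''. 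The converse, by contrast, is a routine perturbation estimate.
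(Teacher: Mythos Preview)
The paper does not prove this theorem; it is introduced with the words ``we recall a characterization'' and no proof environment follows the statement. There is therefore nothing in the paper to compare your argument against.

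Taken on its own, your proof is correct in both directions: the perturbation estimate for the converse and the Hahn--Banach separation plus the ball argument for the forward implication both go through as written. The only point worth flagging is the clause ``since $\mathbb{X}_{+}$ is proper it contains no neighbourhood of the origin.'' The paper's definition of an extensible cone requires $\mathbb{X}_{+}$ only to be a cone, and in the paper's conventions a cone need not be proper; if one literally allowed $\mathbb{X}_{+}=\mathbb{X}$ the theorem itself would be false (every cone would then be extensible). So this is not a defect in your reasoning but an implicit hypothesis that must be read into the definition. You might simply observe that for any cone $\mathbb{X}_{+}\neq\mathbb{X}$ one has $0\notin\operatorname{int}(\mathbb{X}_{+})$, since a ball about the origin would dilate to all of $\mathbb{X}$; that is all your separation step actually uses.
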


In the following result, by Theorem \ref{k}, given a normed space $\mathbb{X}$ and $x_0\in \mathbb{X}\setminus \lbrace 0\rbrace,$ we construct an extensible cone $\mathbb{X}^+$ in $\mathbb{X}$ containing $x_0.$

\begin{proposition}\label{5}
Let $(\mathbb{X}, \Vert \cdot \Vert)$ be a normed space and $x_0 \in \mathbb{X}\setminus\lbrace 0\rbrace.$ Then there exists an extensible cone $\mathbb{X}_{x_0}^+$ containing $x_0$ such that $(\mathbb{X}, \mathbb{X}_{x_0}^+, \Vert \cdot \Vert)$ is an ordered normed space. 
\end{proposition}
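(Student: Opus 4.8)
The plan is to build $\mathbb{X}_{x_0}^+$ from a single supporting functional obtained via the Hahn--Banach extension theorem, and then to read off extensibility directly from Theorem \ref{k}. Since $x_0 \neq 0$, the Hahn--Banach theorem furnishes $x_0^* \in \mathbb{X}^*$ with $\Vert x_0^* \Vert = 1$ and $x_0^*(x_0) = \Vert x_0 \Vert$. I would then define
$$\mathbb{X}_{x_0}^+ := \{ x \in \mathbb{X} : x_0^*(x) \geq \Vert x \Vert \}.$$

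First I would verify that $\mathbb{X}_{x_0}^+$ is a cone containing $x_0$. It is nonempty since $0 \in \mathbb{X}_{x_0}^+$, and $x_0 \in \mathbb{X}_{x_0}^+$ because $x_0^*(x_0) = \Vert x_0 \Vert$. If $x, y \in \mathbb{X}_{x_0}^+$ then $x_0^*(x + y) = x_0^*(x) + x_0^*(y) \geq \Vert x \Vert + \Vert y \Vert \geq \Vert x + y \Vert$, and if $x \in \mathbb{X}_{x_0}^+$ and $\alpha \geq 0$ then $x_0^*(\alpha x) = \alpha x_0^*(x) \geq \alpha \Vert x \Vert = \Vert \alpha x \Vert$; hence $\mathbb{X}_{x_0}^+$ is closed under addition and under multiplication by nonnegative scalars. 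To see that $(\mathbb{X}, \mathbb{X}_{x_0}^+, \Vert \cdot \Vert)$ is an ordered normed space I would check that $\mathbb{X}_{x_0}^+$ is proper: if $x$ and $-x$ both lie in $\mathbb{X}_{x_0}^+$, then $x_0^*(x) \geq \Vert x \Vert$ and $-x_0^*(x) = x_0^*(-x) \geq \Vert x \Vert$, so adding these inequalities gives $0 \geq 2 \Vert x \Vert$, forcing $x = 0$.

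Finally, extensibility is immediate: taking the functional $x_0^*$ together with the constant $\alpha = 1$, the inequality $x_0^*(x) \geq \alpha \Vert x \Vert$ holds for every $x \in \mathbb{X}_{x_0}^+$ by the very definition of the cone, so Theorem \ref{k} yields that $\mathbb{X}_{x_0}^+$ is an extensible cone. There is no serious obstacle in this argument; the only point worth flagging is that $\Vert x_0^* \Vert = 1$ forces $x_0^*(x) \leq \Vert x \Vert$ for every $x$, so in fact $\mathbb{X}_{x_0}^+ = \{ x \in \mathbb{X} : x_0^*(x) = \Vert x \Vert \}$, and one should simply observe that this cone is nonetheless large enough to contain the ray $\{ \lambda x_0 : \lambda \geq 0 \}$, which is all that the statement requires. (If one preferred to exhibit the enclosing cone from the definition explicitly rather than appeal to Theorem \ref{k}, the half-space $\{ x \in \mathbb{X} : x_0^*(x) \geq 0 \}$ serves as $\mathbb{X}_+$ for any $\alpha \in (0,1)$.)
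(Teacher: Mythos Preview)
Your proof is correct and follows essentially the same route as the paper: the paper likewise produces a norming functional via Hahn--Banach (there written $x^*$ with $\Vert x^*\Vert=\Vert x_0\Vert$ rather than $1$), sets $\mathbb{X}_{x_0}^+=\{x:x^*(x)\geq\alpha\Vert x\Vert\}$ for a parameter $\alpha\in(0,\Vert x_0\Vert]$, verifies it is a proper cone containing $x_0$, and invokes Theorem~\ref{k}. Your construction is the endpoint case $\alpha=\Vert x_0\Vert$ (after normalising the functional), and your observation that the cone then coincides with $\{x:x_0^*(x)=\Vert x\Vert\}$ is correct but immaterial for the proposition.
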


\begin{proof}
Let $x_0\in \mathbb{X}\setminus \lbrace 0\rbrace.$ Define $\mathbb{Y}=span\lbrace x_0\rbrace=\lbrace \beta x_0 : \beta \in \mathbb{R}\rbrace.$ Define $y^*(\beta x_0)=\beta \Vert x_0\Vert^2.$ Then $y^*$ is a linear functional on $\mathbb{Y}$ and $y^*(x_0)=\Vert x_0\Vert^2.$ Also, we have $\vert y^*(\beta x_0)\vert =\vert\beta\vert \Vert x_0\Vert^2=\Vert x_0\Vert \Vert \beta x_0\Vert$ so that $y^*\in \mathbb{Y}^*$ with $\Vert y^*\Vert=\Vert x_0\Vert.$ Consider Hahn-Banach extension $x^*$ of $y^*$ to $\mathbb{X}.$ Next, let $\mathbb{X}_{x_0}^+ = \lbrace x\in \mathbb{X}: x^*(x)\geq \alpha \Vert x\Vert\rbrace$ for some $\alpha\in (0,\Vert x_0\Vert].$ Let $x,y\in \mathbb{X}_{x_0}^+$ and $\delta \in \mathbb{R}$ such that $\delta \geq 0.$ Then $x^*(x)\geq \alpha \Vert x\Vert$ and $x^*(y)\geq \alpha \Vert y\Vert.$ Thus $x^*(x+y)\geq \alpha (\Vert x\Vert +\Vert y\Vert)\geq \alpha \Vert x+y\Vert$ and  $g(\delta x)=\delta g(x)\geq \delta \alpha \Vert x\Vert=\alpha \Vert \delta x\Vert$ so that $x+y$ and $\delta x \in \mathbb{X}_{x_0}^+.$ Therefore $\mathbb{X}_{x_0}^+$ is a cone. By Theorem \ref{k}, it follows that $\mathbb{X}_{x_0}^+$  is an extensible cone. Note that $x^*(x_0)=\Vert x_0\Vert^2=\Vert x_0\Vert \Vert x_0\Vert\geq \alpha \Vert x_0\Vert$ so that $x_0\in \mathbb{X}_{x_0^+}.$ Next, assume that $x\in \mathbb{X}_{x_0}^+\bigcap (-\mathbb{X}_{x_0}^+).$ Then $\pm x \in \mathbb{X}_{x_0}^+$ and consequently $\pm x^*(x)=x^*(\pm)\geq \alpha \Vert \pm x\Vert=\alpha \Vert x\Vert \geq 0$ so that $x^*(x)=0.$ Using the fact $\pm x^*(x)=x^*(\pm)\geq \alpha \Vert x\Vert \geq 0,$ we get that $0\geq \alpha \Vert x\Vert \geq 0.$ Since $\alpha >0,$ we get $\Vert x\Vert =0$ that means $x=0.$ Therefore $\mathbb{X}_{x_0}^+$ is proper. Hence $(\mathbb{X}, \mathbb{X}_{x_0}^+, \Vert \cdot \Vert)$ is an ordered normed space. 
\end{proof}

\begin{corollary}
Let $(\mathbb{X}, \Vert \cdot \Vert)$ be a normed space and $x_0 \in \mathbb{X}\setminus\lbrace 0\rbrace.$ 
\begin{enumerate}
\item[(1)] If $\mathbb{X}^+=\lbrace \delta  x_0:\delta \in \mathbb{R}, \delta \geq 0\rbrace,$ then $(\mathbb{X}, \mathbb{X}^+, \Vert \cdot \Vert)$ is an ordered normed space. Moreover, $\mathbb{X}^+$ is norm-closed.
\item[(2)] If $\alpha=\Vert x_0\Vert$ and $x,y\in \mathbb{X}$ such that $0\leq x\leq y$ in $\mathbb{X}_{x_0}^+,$  then $\Vert x\Vert \leq \Vert y\Vert.$
\end{enumerate}
\end{corollary}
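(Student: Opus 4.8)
The statement has two independent parts, and I would dispatch them separately.

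For part (1), the plan is to verify the cone axioms directly. Since $\delta_1 x_0+\delta_2 x_0=(\delta_1+\delta_2)x_0$ and $\gamma(\delta x_0)=(\gamma\delta)x_0$ for $\delta_1,\delta_2,\delta,\gamma\geq 0$, the set $\mathbb{X}^+=\lbrace \delta x_0:\delta\geq 0\rbrace$ is closed under addition and nonnegative scaling, hence a cone. For properness, if $\delta x_0=-\delta' x_0$ with $\delta,\delta'\geq 0$, then $(\delta+\delta')x_0=0$, and $x_0\neq 0$ forces $\delta=\delta'=0$, so $\mathbb{X}^+\cap(-\mathbb{X}^+)=\lbrace 0\rbrace$ and $(\mathbb{X},\mathbb{X}^+,\Vert\cdot\Vert)$ is an ordered normed space. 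For norm-closedness I would argue through the scalars rather than the vectors: if $\delta_n x_0\to y$, then $\delta_n=\Vert \delta_n x_0\Vert/\Vert x_0\Vert\to \Vert y\Vert/\Vert x_0\Vert=:\delta\geq 0$, so $y=\lim \delta_n x_0=\delta x_0\in\mathbb{X}^+$. The only mild subtlety is that $x_0$ need not be a unit vector, which is precisely why one passes to the coefficients $\delta_n$.

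For part (2), the idea is to re-use the Hahn--Banach functional $x^*\in\mathbb{X}^*$ built in the proof of Proposition \ref{5}. Because $x^*$ extends $y^*$ with $\Vert x^*\Vert=\Vert x_0\Vert$, we automatically have $x^*(z)\leq \Vert x^*\Vert\,\Vert z\Vert=\Vert x_0\Vert\,\Vert z\Vert$ for every $z\in\mathbb{X}$; combined with the defining inequality $x^*(z)\geq \alpha\Vert z\Vert=\Vert x_0\Vert\,\Vert z\Vert$ of $\mathbb{X}_{x_0}^+$ (using $\alpha=\Vert x_0\Vert$), this shows that every $z\in\mathbb{X}_{x_0}^+$ satisfies the \emph{equality} $x^*(z)=\Vert x_0\Vert\,\Vert z\Vert$. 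Now from $0\leq x\leq y$ in $\mathbb{X}_{x_0}^+$ I would read off $x\in\mathbb{X}_{x_0}^+$ and $y-x\in\mathbb{X}_{x_0}^+$, hence $y=x+(y-x)\in\mathbb{X}_{x_0}^+$ since a cone is closed under addition; applying $x^*$ and using additivity gives
\[ \Vert x_0\Vert\,\Vert y\Vert=x^*(y)=x^*(x)+x^*(y-x)=\Vert x_0\Vert\,\Vert x\Vert+\Vert x_0\Vert\,\Vert y-x\Vert, \]
and dividing by $\Vert x_0\Vert>0$ yields $\Vert y\Vert=\Vert x\Vert+\Vert y-x\Vert\geq \Vert x\Vert$.

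I do not expect a genuine obstacle here: the whole content of part (2) is that, on the special cone $\mathbb{X}_{x_0}^+$, the functional $x^*/\Vert x_0\Vert$ is simultaneously norm-preserving and monotone, so monotonicity of $x^*$ on the cone transfers directly to monotonicity of $\Vert\cdot\Vert$. The only point requiring a moment's care is observing that $y$ itself (not merely $x$ and $y-x$) lies in the cone before evaluating $x^*(y)$ as $\Vert x_0\Vert\,\Vert y\Vert$, which is immediate from closure under addition.
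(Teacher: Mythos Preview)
Your proof is correct and follows essentially the same approach as the paper: for part (1) both arguments pass to the scalar coefficients $\delta_n$ (you via continuity of the norm, the paper via the Cauchy criterion), and for part (2) both exploit the Hahn--Banach functional $x^*$ from Proposition~\ref{5} together with $\Vert x^*\Vert=\alpha=\Vert x_0\Vert$. Your observation that in fact $x^*(z)=\Vert x_0\Vert\,\Vert z\Vert$ for every $z\in\mathbb{X}_{x_0}^+$ is a mild sharpening of the paper's chain of inequalities and yields the stronger conclusion $\Vert y\Vert=\Vert x\Vert+\Vert y-x\Vert$, but the underlying mechanism is the same.
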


\begin{proof}
Consider $x^*\in \mathbb{X}^*$ as induced in the Proposition \ref{5}.
\begin{enumerate}
\item[(1)] Let $\mathbb{X}^+=\lbrace \delta  x_0:\delta \in \mathbb{R}, \delta \geq 0\rbrace.$ Then $\mathbb{X}^+\subseteq \mathbb{X}_{x_0}^+$ and $\mathbb{X}_{x_0}^+$ is cone implies that $\mathbb{X}^+$ is also cone. Note that $0\leq \delta_1 x_0 \leq \delta_2 x_0$ if and only if $0\leq \delta_1 \leq \delta_2.$ Next, let $x,y\in \mathbb{X}^+$ such that $0\leq x\leq y.$ There exist $\delta_1, \delta_2\in \mathbb{R}$ such that $0\leq \delta_1 \leq \delta_2.$ Thus $\Vert x\Vert =\delta_1 \Vert x_0\Vert \leq \delta_2 \Vert x_0\Vert =\Vert y\Vert.$ Hence $(\mathbb{X}, \mathbb{X}^+, \Vert \cdot \Vert)$ is an ordered normed space.

Since $\mathbb{X}^+\subseteq \mathbb{X}_{x_0}^+$ and $\mathbb{X}_{x_0}^+$ is proper, we conclude that $\mathbb{X}^+$ is also proper. Finally, let $\lbrace \delta_n x_0\rbrace$ be a sequence in $\mathbb{X}^+$ such that $\delta_n x_0 \to y \in \mathbb{X}.$ Then $\vert \delta_n -\delta_m \vert =\frac{\Vert \delta_n x_0-\delta_m x_0\Vert}{\Vert x_0\Vert}$ and $\lbrace \delta_n x_0\rbrace$ is a Cauchy sequence in $\mathbb{X}$ so that $\lbrace \delta_n \rbrace$ is a Cauchy sequence in $\mathbb{R}.$ Since $\delta_n \geq 0$ for all $n\in \mathbb{N}$ and $\mathbb{R}$ is complete, there exists unique $\delta \in \mathbb{R}$ such that $\delta_n \to \delta.$ In this, case $\delta_n x_0\to \delta x_0=y.$ Thus $y\in \mathbb{X}^+$ and consequently $\mathbb{X}^+$ is norm-closed. 

\item[(2)] Let $\alpha=\Vert x_0\Vert.$ Let $x,y\in \mathbb{X}$ such that $0\leq x\leq y$ in $\mathbb{X}_{x_0}^+.$ Thus $y-x\in \mathbb{X}_{x_0}^+$ and $y=(y-x)+x.$ Then $x^*(y-x)\geq \alpha \Vert y-x\Vert$ and $x^*(x)\geq \alpha \Vert x\Vert$  so that $\Vert y\Vert =\Vert (y-x) +x\Vert = \lbrace y^*((y-x) +x): y^*\in \mathbb{X}^*, \Vert y^*\Vert=1\rbrace \geq \frac{x^*}{\Vert x^*\Vert}((y-x) +x)\geq \frac{x^*}{\Vert x^*\Vert}(x)\geq \frac{\alpha}{\Vert x^*\Vert} \Vert x\Vert=\frac{\Vert x_0\Vert}{\Vert x_0\Vert} \Vert x\Vert=\Vert x\Vert.$ 
\end{enumerate} 
\end{proof}

Let $\mathbb{X}$ be a normed space and $x_0 \in \mathbb{X}\setminus\lbrace 0\rbrace.$ If $x^*$ is the bounded linear functional induced by $x_0$ as in Proposition \ref{5}, then by $f_1\sim_{x_0}f_2,$ we simply mean that $f_1\sim_{x^*}f_2.$ 

The following result is regarded as Jordan decomposition for WBV-functions in an ordered normed space.

\begin{theorem}\label{12}
Let $\mathbb{X}$ be a normed space. If $f:[a,b]\to \mathbb{X}$ is a function of weakly bounded variation, then for each $x_0\in \mathbb{X}\setminus \lbrace 0\rbrace,$ there exists an extensible cone $\mathbb{X}_{x_0}^+$ in $\mathbb{X}$ and $f_{x_{0,1}},f_{x_{0,2}}:[a,b]\to \mathbb{X}$ increasing functions in $(\mathbb{X},X_{x_0}^+)$ such that $f\sim_{x_0}(f_{x_{0,1}}-f_{x_{0,2}}).$ In other words, givan a function of weakly bounded variation $f:[a,b]\to \mathbb{X},$ there exist functions of weakly bounded variation $f_{x_{0,1}},f_{x_{0,2}}:[a,b]\to \mathbb{X}$  such that $f\sim_{x_0}(f_{x_{0,1}}-f_{x_{0,2}}).$
\end{theorem}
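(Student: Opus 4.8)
\section*{Proof proposal}

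The plan is to reduce the problem to the classical scalar Jordan decomposition and then transport the resulting monotone functions back into $\mathbb{X}$ along the ray spanned by $x_0$. First I would invoke Proposition~\ref{5} to fix, for the given $x_0\in\mathbb{X}\setminus\lbrace 0\rbrace$, the extensible cone $\mathbb{X}_{x_0}^+$ together with the bounded linear functional $x^*\in\mathbb{X}^*$ constructed there; recall that $x^*(x_0)=\Vert x_0\Vert^2$, that $\mathbb{X}_{x_0}^+=\lbrace x\in\mathbb{X}: x^*(x)\geq\alpha\Vert x\Vert\rbrace$ for a suitable $\alpha\in(0,\Vert x_0\Vert]$, and that $x_0\in\mathbb{X}_{x_0}^+$, so that $\delta x_0\in\mathbb{X}_{x_0}^+$ for every $\delta\geq 0$ since $\mathbb{X}_{x_0}^+$ is a cone. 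By the convention fixed just before the statement, $f\sim_{x_0}g$ means precisely $x^*\circ(f-g)=0$.

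Next, since $f$ is a WBV-function, the scalar function $\varphi:=x^*\circ f\colon[a,b]\to\mathbb{R}$ is of bounded variation. I would then apply the classical Jordan decomposition theorem to $\varphi$: setting $g_1(t):=V_a^{t}(\varphi)$ (the total variation of $\varphi$ over $[a,t]$) and $g_2:=g_1-\varphi$, both $g_1,g_2\colon[a,b]\to\mathbb{R}$ are non-decreasing and $\varphi=g_1-g_2$. Finally I would lift $g_1,g_2$ along $x_0$ by defining
\[
f_{x_{0,1}}(t):=\frac{g_1(t)}{\Vert x_0\Vert^2}\,x_0,\qquad f_{x_{0,2}}(t):=\frac{g_2(t)}{\Vert x_0\Vert^2}\,x_0,\qquad t\in[a,b].
\]

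The three required properties are then routine verifications. For monotonicity, if $s\leq t$ then $f_{x_{0,i}}(t)-f_{x_{0,i}}(s)=\Vert x_0\Vert^{-2}\bigl(g_i(t)-g_i(s)\bigr)x_0$ is a non-negative multiple of $x_0$, hence lies in $\mathbb{X}_{x_0}^+$, so each $f_{x_{0,i}}$ is increasing in $(\mathbb{X},\mathbb{X}_{x_0}^+)$. For the weak relation, $x^*\bigl(f_{x_{0,i}}(t)\bigr)=\Vert x_0\Vert^{-2}g_i(t)\,x^*(x_0)=g_i(t)$, whence $x^*\bigl(f_{x_{0,1}}(t)-f_{x_{0,2}}(t)\bigr)=g_1(t)-g_2(t)=\varphi(t)=x^*(f(t))$ for all $t\in[a,b]$; that is, $x^*\circ\bigl(f-(f_{x_{0,1}}-f_{x_{0,2}})\bigr)=0$, which is exactly $f\sim_{x_0}(f_{x_{0,1}}-f_{x_{0,2}})$. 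For the last assertion, any $y^*\in\mathbb{X}^*$ gives $y^*\circ f_{x_{0,i}}=\Vert x_0\Vert^{-2}y^*(x_0)\,g_i$, a scalar multiple of a monotone (hence BV) function, so $f_{x_{0,1}}$ and $f_{x_{0,2}}$ are themselves WBV-functions (indeed of bounded variation in norm).

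Since the argument is essentially ``apply the scalar Jordan decomposition and push it forward along $x_0$'', there is no serious analytic obstacle; the only points needing care are bookkeeping ones, namely that the symbol $\sim_{x_0}$ must be read with respect to the specific functional $x^*$ produced in Proposition~\ref{5}, and that the chosen lift genuinely lands inside $\mathbb{X}_{x_0}^+$ — which works precisely because $x_0$ itself was arranged to lie in that cone. (One could equivalently phrase the lift using the sub-cone $\lbrace\delta x_0:\delta\geq 0\rbrace$ of the preceding corollary.)
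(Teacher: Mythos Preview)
Your proposal is correct and follows essentially the same route as the paper: invoke Proposition~\ref{5} to obtain $x^*$ and the cone $\mathbb{X}_{x_0}^+$, apply the scalar Jordan decomposition to $x^*\circ f$, and lift the two monotone pieces back into $\mathbb{X}$ along $x_0/\Vert x_0\Vert^2$. Your verification that the $f_{x_{0,i}}$ are WBV (noting $y^*\circ f_{x_{0,i}}$ is a scalar multiple of a monotone function) is in fact a bit cleaner than the paper's argument via the extensible-cone inequality, but the substance is the same.
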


\begin{proof}
Let $f:[a,b]\to \mathbb{X}$ be a function of bounded variation and $x_0\in \mathbb{X}\setminus \lbrace 0\rbrace.$ Also, let $x^*$ be a bounded linear functional constructed by $x_0$ as in the Proposition \ref{5}. Consider $x^* \circ f:[a,b]\to \mathbb{R}.$ Then $x^* \circ f$ is also a function of bounded variation. By Jordan decomposition theorem, there exists increasing functions $f_1, f_2:[a,b]\to \mathbb{R}$ such that $x^*\circ f=f_1-f_2.$ Put $g_1(t)=f_1(t)x_0$ and $g_2(t)=f_2(t)x_0$ for all $t\in [a,b]$ so that $g_1, g_2:[a,b]\to X$ increasing functions in $(X,X_{x_0}^+).$ Then $f_{x_{0,1}}=\frac{g_1}{\Vert x_0\Vert^2}, f_{x_{0,2}}=\frac{g_2}{\Vert x_0\Vert^2}$ are also increasing functions in $(X,X_{x_0}^+).$ Next, we have 
\begin{eqnarray*}
x^*\circ ( f-f_{x_{0,1}}+f_{x_{0,2}})(t) &=& x^*(f(t))-x^*(f_{x_{0,1}}(t))+x^*(f_{x_{0,2}}(t)) \\
&=& x^*(f(t))-\frac{x^*(g_1(t))}{\Vert x_0\Vert^2}+\frac{x^*(g_2(t))}{\Vert x_0\Vert^2} \\
&=& x^*(f(t))-\frac{f_1(t)}{\Vert x_0\Vert^2}x^*(x_0)+\frac{f_2(t)}{\Vert x_0\Vert^2}x^*(x_0) \\
&=& x^*(f(t))-\frac{f_1(t)}{\Vert x_0\Vert^2}\Vert x_0\Vert^2+\frac{f_2(t)}{\Vert x_0\Vert^2}\Vert x_0\Vert^2 \\
\end{eqnarray*}
\begin{eqnarray*}
\hspace{1.5 cm} &=& x^*\circ f-(f_1-f_2)(t) \\
&=& x^*\circ f(t)-x^*\circ f(t)=0 
\end{eqnarray*}

for all $t\in [a,b].$ Thus $h_0\circ (f-f_{x_{0,1}}+f_{x_{0,2}})=0$ so that $f\sim_{x_0}(f_{x_{0,1}}-f_{x_{0,2}}).$ 

Our proof is completed if we can show the functions $f_{x_{0,1}}$ and $f_{x_{0,2}}$ are of weakly bounded variation. Consider a partition $P=\lbrace a=t_0<t_1<t_2<\cdots <t_{n-1}<t_n\rbrace$ of $[a,b].$ Since $f_{x_{0,1}}$ is increasing, we get that $f_{x_{0,1}}(t_i)-f_{x_{0,1}}(t_{i-1})\in \mathbb{X}_+$ for $i=1,2,\cdots, n.$ Put $\beta=\frac{1}{\alpha}.$ Then $\Vert f_{x_{0,1}}(t_i)-f_{x_{0,1}}(t_{i-1})\Vert \leq \beta x^*(f_{x_{0,1}}(t_i)-f_{x_{0,1}}(t_{i-1})).$ For fix $y^*\in \mathbb{X}^*,$ we get that      
\begin{eqnarray*}
\displaystyle \sum_{i=1}^n \vert (y^*\circ f)(t_i)-(y^*\circ f)(t_{i-1})\vert
&=& \displaystyle \sum_{i=1}^n \vert y^* (f(t_i)-f(t_{i-1}))\vert \\
&\leq& \Vert y^*\Vert \displaystyle \sum_{i=1}^n \Vert f(t_i)-f(t_{i-1}))\Vert \\
&\leq& \Vert y^*\Vert \beta \displaystyle \sum_{i=1}^n x^*(f(t_i)-f(t_{i-1}))\\
&=& \Vert y^*\Vert \beta x^*\left (\displaystyle \sum_{i=1}^n (f(t_i)-f(t_{i-1}))\right)\\
&=& \Vert y^*\Vert \beta x^* (f(b)-f(a)).
\end{eqnarray*}
\end{proof}

The sum $\displaystyle \sum_{i=1}^n \vert (y^*\circ f)(t_i)-(y^*\circ f)(t_{i-1})\vert$ is bounded above by a fix number $\Vert y^*\Vert \beta x^* (f(b)-f(a))$ for any partition $P=\lbrace a=t_0<t_1<t_2<\cdots <t_{n-1}<t_n\rbrace$ of $[a,b].$ Therefore by order completeness or Dedekind completeness property of real number system, we conclude that $y^*\circ f_{x_{0,1}}$ is of bounded variation and consequently $f_{x_{0,1}}$ is of weakly bounded variation. Similarily, we can show that  $f_{x_{0,2}}$ is of weakly bounded variation.

\vspace{0.2 cm}

The result \ref{12} is a Jordan decomposition theorem for WBV-functions in an ordered normed space as a function of weakly bounded variation is not exactly equal to difference of two functions of weakly bounded variation in an extensible cone induced by a non-zero point but it is equal in the bounded functional sense $i.e.$ after taking composition of these functions by a bounded linear functional induced by the non-zero point. 

Next result provides a weak relation for an increasing function in $(\mathbb{X},\mathbb{X}_{x_0}^+)$ in the terms of bounded linear functional $x^*$ induced by $x_0.$

\begin{lemma}\label{001}
Let $\mathbb{X}$ be a normed space and $\mathbb{X}_{x_0}^+$ be an extensible cone induced by $x_0\in \mathbb{X}\setminus \lbrace 0\rbrace.$ If $f:[a,b]\to \mathbb{X}$ is an increasing function in $(\mathbb{X},\mathbb{X}_{x_0}^+)$ and $x^*$ be bounded linear functional induced by $x_0,$ then $x^*\circ f:[a,b]\to \mathbb{R}$ is increasing and satisfies that $f\sim_{x_0} x^*\circ f(t)\frac{x_0}{\Vert x_0\Vert^2}.$ 
\end{lemma}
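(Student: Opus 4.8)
The plan is to verify the two assertions directly from the construction in Proposition \ref{5} together with the convention fixing the functional $x^*$ attached to $x_0$.

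First I would prove monotonicity of $x^*\circ f$. Fix $s\le t$ in $[a,b]$. Since $f$ is increasing in $(\mathbb{X},\mathbb{X}_{x_0}^+)$, we have $f(t)-f(s)\in\mathbb{X}_{x_0}^+$, and by the defining property of the extensible cone $\mathbb{X}_{x_0}^+=\lbrace x:x^*(x)\ge\alpha\Vert x\Vert\rbrace$ (for the relevant $\alpha>0$) this yields $x^*(f(t)-f(s))\ge\alpha\Vert f(t)-f(s)\Vert\ge 0$. Hence $x^*(f(s))\le x^*(f(t))$, i.e.\ $x^*\circ f$ is increasing on $[a,b]$.

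Next I would establish the weak relation. Recall from the construction in Proposition \ref{5} that the functional $x^*$ associated to $x_0$ satisfies $x^*(x_0)=\Vert x_0\Vert^2$. Write $g(t)=x^*\circ f(t)\,\frac{x_0}{\Vert x_0\Vert^2}$, which is a well-defined function $[a,b]\to\mathbb{X}$ since it is a scalar multiple of the fixed vector $x_0$. Then for every $t\in[a,b]$,
\[
x^*(g(t))=x^*\!\left(x^*\circ f(t)\,\frac{x_0}{\Vert x_0\Vert^2}\right)=\frac{x^*\circ f(t)}{\Vert x_0\Vert^2}\,x^*(x_0)=\frac{x^*\circ f(t)}{\Vert x_0\Vert^2}\,\Vert x_0\Vert^2=x^*\circ f(t).
\]
Therefore $x^*\circ(f-g)(t)=x^*(f(t))-x^*(g(t))=0$ for all $t\in[a,b]$, so $x^*\circ(f-g)=0$, which by Definition \ref{7} and the convention that $f\sim_{x_0}g$ abbreviates $f\sim_{x^*}g$ for the functional $x^*$ induced by $x_0$ means exactly $f\sim_{x_0}g$; that is, $f\sim_{x_0}x^*\circ f(t)\frac{x_0}{\Vert x_0\Vert^2}$. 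One may also note in passing that $g$ is itself increasing in $(\mathbb{X},\mathbb{X}_{x_0}^+)$, since $x^*\circ f$ is increasing on $\mathbb{R}$ and $\frac{x_0}{\Vert x_0\Vert^2}\in\mathbb{X}_{x_0}^+$ by Proposition \ref{5}, so $g$ realizes a genuine weakly bounded variation representative in the sense of Theorem \ref{12}.

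There is no substantial obstacle here: the only steps requiring care are the bookkeeping ones, namely invoking the identity $x^*(x_0)=\Vert x_0\Vert^2$ from the construction of the extensible cone, and using the stated convention for $\sim_{x_0}$. Both the monotonicity of $x^*\circ f$ and the vanishing of $x^*\circ(f-g)$ then follow from one-line computations.
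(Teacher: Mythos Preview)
Your proof is correct and more direct than the paper's. The paper first observes that $f$ increasing forces $f$ to be of weakly bounded variation, then invokes Theorem~\ref{12} together with the explicit Jordan decomposition of the real function $x^*\circ f$ via its variation function, choosing $f_{x_{0,1}}(t)=\mathcal{V}_{x^*\circ f}(t)\frac{x_0}{\Vert x_0\Vert^2}$ and $f_{x_{0,2}}(t)=[\mathcal{V}_{x^*\circ f}(t)-x^*\circ f(t)]\frac{x_0}{\Vert x_0\Vert^2}$, and then notes that their difference is $x^*\circ f(t)\frac{x_0}{\Vert x_0\Vert^2}$. You bypass all of this by simply computing $x^*(g(t))$ directly from the identity $x^*(x_0)=\Vert x_0\Vert^2$ built into Proposition~\ref{5}. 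Your route is cleaner and avoids the unnecessary detour through Theorem~\ref{12} and the variation function; the paper's route has the minor advantage of exhibiting the result explicitly as an instance of the Jordan decomposition framework just established, and of displaying the two increasing pieces separately. The monotonicity argument for $x^*\circ f$ is identical in both proofs.
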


\begin{proof}
$f$ is increasing function implies that $f$ is a function of weakly bounded variation. In particular, $x^*\circ f:[a,b]\to \mathbb{R}$ is a function of bounded variation. Consider corresponding variation function $\mathcal{V}_{x^*\circ f}$ for $x^*\circ f.$ By Theorem \ref{12}, there exists $f_{x_{0,1}}$ and $f_{x_{0,2}}$ increasing functions in $(X,X_{x_0}^+)$ such that $f\sim_{x_0}(f_{x_{0,1}}-f_{x_{0,2}})~i.e.$ $x^*\circ (f-f_{x_{0,1}}+f_{x_{0,2}})=0.$ In fact, we can choose $f_{x_{0,1}}(t)=\mathcal{V}_{x^*\circ f}(t)\frac{x_0}{\Vert x_0\Vert^2}$ and $f_{x_{0,2}}(t)=[\mathcal{V}_{x^*\circ f}(t)-x^*\circ f(t)]\frac{x_0}{{\Vert x_0\Vert^2}}$ so that $f\sim_{x_0}x^*\circ f(t)\frac{x_0}{\Vert x_0\Vert^2}.$ 

Next, let $t_2>t_1.$ Since $f$ is increasing in $(X,X_{x_0}^+),$ we have $f(t_2)-f(t_1)\in X_{x_0}^+.$ Then $x^*\circ f(t_2)-x^*\circ f(t_1)=x^*(f(t_2)-f(t_1))\geq \alpha \Vert f(t_2)-f(t_1)\Vert \geq 0$ so that $x^*\circ f(t_2)\geq x^*\circ f(t_1)~i.e.~x^*\circ f$ is increasing. 
\end{proof}

\begin{corollary}\label{002}
Let $\mathbb{X}$ be a normed space and $\mathbb{X}_{x_0}^+$ be an extensible cone induced by $x_0\in \mathbb{X}$ such that $\Vert x_0\Vert=1.$ If $f:[a,b]\to \mathbb{X}$ is an increasing function in $(\mathbb{X},\mathbb{X}_{x_0}^+)$ and $x^*$ be bounded linear functional induced by $x_0,$ then $x^*\circ f:[a,b]\to \mathbb{R}$ is increasing and satisfies that $f\sim_{x_0} x^*\circ f(t) x_0.$ 
\end{corollary}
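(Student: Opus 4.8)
The plan is to derive Corollary \ref{002} as the special case $\Vert x_0\Vert = 1$ of Lemma \ref{001}. Since the extensible cone $\mathbb{X}_{x_0}^+$ and the bounded linear functional $x^*$ are exactly those constructed in Proposition \ref{5}, all the hypotheses of Lemma \ref{001} are in force here; the only extra information is the normalization $\Vert x_0\Vert = 1$.

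First I would invoke Lemma \ref{001} verbatim: because $f:[a,b]\to\mathbb{X}$ is increasing in $(\mathbb{X},\mathbb{X}_{x_0}^+)$, the scalar function $x^*\circ f:[a,b]\to\mathbb{R}$ is increasing, and the weak relation $f\sim_{x_0} x^*\circ f(t)\frac{x_0}{\Vert x_0\Vert^2}$ holds. Then I would simply substitute $\Vert x_0\Vert = 1$, so that $\Vert x_0\Vert^2 = 1$ and $\frac{x_0}{\Vert x_0\Vert^2} = x_0$; this immediately turns the conclusion into $f\sim_{x_0} x^*\circ f(t)\, x_0$, which is exactly the assertion of the corollary. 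The monotonicity of $x^*\circ f$ is unchanged by the normalization, so nothing further is needed there.

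There is really no obstacle: the corollary is a direct specialization and the proof is one line. If one wanted to be slightly more self-contained, one could instead recall that for $t_2 > t_1$ one has $f(t_2)-f(t_1)\in\mathbb{X}_{x_0}^+$, hence $x^*(f(t_2))-x^*(f(t_1)) = x^*(f(t_2)-f(t_1)) \geq \alpha\Vert f(t_2)-f(t_1)\Vert \geq 0$, giving monotonicity of $x^*\circ f$ directly, and then use the explicit choices $f_{x_{0,1}}(t)=\mathcal{V}_{x^*\circ f}(t)\,x_0$ and $f_{x_{0,2}}(t)=[\mathcal{V}_{x^*\circ f}(t)-x^*\circ f(t)]\,x_0$ from the proof of Lemma \ref{001} (now with $\Vert x_0\Vert^2 = 1$) to exhibit the decomposition witnessing $f\sim_{x_0} x^*\circ f(t)\,x_0$. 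Either way, the key step is just recognizing that the denominator $\Vert x_0\Vert^2$ collapses to $1$.
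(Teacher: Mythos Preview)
Your proposal is correct and matches the paper's approach exactly: the paper states Corollary \ref{002} without proof, treating it as the immediate specialization of Lemma \ref{001} to the case $\Vert x_0\Vert = 1$, which is precisely what you do.
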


Finally, in a normed space $\mathbb{X},$ for each $x_0\in \mathbb{X}\setminus \lbrace 0\rbrace,$ we construct a function of weakly bounded variation having a nice Jordan decomposition which satisfy weak relation in terms of the functional $x^*$ induced by $x_0.$

\begin{theorem}\label{003}
Let $\mathbb{X}$ be a normed space such that $\dim{\mathbb{X}}>1.$ For each $x_0\in \mathbb{X}\setminus \lbrace 0\rbrace,$ there exists $f:[a,b]\to \mathbb{X},$ a function of weakly bounded variation such that only possibility of satisfying $f\sim_{x_0}(f_{x_{0,1}}-f_{x_{0,2}}),$ where $f_{x_{0,1}}$ and $f_{x_{0,2}}$ increasing functions in $(\mathbb{X},\mathbb{X}_{x_0}^+),$ is that $f_{x_{0,1}} \sim_{x_0} f_{x_{0,2}}.$
\end{theorem}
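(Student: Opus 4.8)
The plan is to produce, for each fixed $x_0\in\mathbb{X}\setminus\{0\}$, one explicit WBV-function $f$ whose whole range is annihilated by the bounded linear functional $x^*$ that Proposition \ref{5} attaches to $x_0$. For such an $f$ the weak relation $\sim_{x_0}$ cannot detect $f$ at all, and the conclusion will drop out the moment we unwind Definition \ref{7}; the hypothesis $\dim\mathbb{X}>1$ is exactly what guarantees that a non-zero vector in $Null(x^*)$ exists.

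First I would fix the data: $x_0\in\mathbb{X}\setminus\{0\}$, the functional $x^*\in\mathbb{X}^*$ induced by $x_0$ as in Proposition \ref{5}, and the associated extensible cone $\mathbb{X}_{x_0}^+$; recall that, by the convention stated just before Theorem \ref{12}, $u\sim_{x_0}v$ means $x^*\circ(u-v)=0$. Since $x^*(x_0)=\Vert x_0\Vert^2\neq0$, $x^*$ is non-zero, so $x^*$ cannot be injective when $\dim\mathbb{X}>1$; hence $Null(x^*)\neq\{0\}$ and I may choose $y_0\in Null(x^*)\setminus\{0\}$.

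Next I would define the example $f:[a,b]\to\mathbb{X}$ by $f(t)=t\,y_0$ (any non-constant real BV-function in place of $t\mapsto t$ works just as well). For every $z^*\in\mathbb{X}^*$ the composite $(z^*\circ f)(t)=t\,z^*(y_0)$ is monotone, hence of bounded variation on $[a,b]$, so $f$ is a WBV-function; and since $y_0\in Null(x^*)$ we have $x^*\circ f\equiv0$. I would also note, to see that the example is genuine rather than degenerate, that $f$ is \emph{not} increasing in $(\mathbb{X},\mathbb{X}_{x_0}^+)$: if $f(t_2)-f(t_1)\in\mathbb{X}_{x_0}^+$ for some $t_2>t_1$, then $0=x^*\big((t_2-t_1)y_0\big)\geq\alpha(t_2-t_1)\Vert y_0\Vert>0$, a contradiction.

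Finally I would draw the conclusion. By Theorem \ref{12} applied to this WBV-function $f$ there do exist increasing $f_{x_{0,1}},f_{x_{0,2}}$ in $(\mathbb{X},\mathbb{X}_{x_0}^+)$ with $f\sim_{x_0}(f_{x_{0,1}}-f_{x_{0,2}})$, so the statement is not vacuous. Now let $f_{x_{0,1}},f_{x_{0,2}}:[a,b]\to\mathbb{X}$ be \emph{any} increasing functions in $(\mathbb{X},\mathbb{X}_{x_0}^+)$ with $f\sim_{x_0}(f_{x_{0,1}}-f_{x_{0,2}})$. By Definition \ref{7} this says $x^*\circ\big(f-f_{x_{0,1}}+f_{x_{0,2}}\big)=0$ on $[a,b]$; since $x^*\circ f\equiv0$, it collapses to $x^*\circ(f_{x_{0,1}}-f_{x_{0,2}})=0$, i.e. $f_{x_{0,1}}\sim_{x_0}f_{x_{0,2}}$, which is the assertion. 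I expect no real obstacle here: everything hinges on the single observation that $Range(f)\subseteq Null(x^*)$ trivializes $\sim_{x_0}$, and the only computation is the routine check that $t\mapsto t\,y_0$ is WBV.
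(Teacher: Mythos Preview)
Your proof is correct and follows essentially the same approach as the paper: both pick a non-zero vector in $Null(x^*)$ (available because $\dim\mathbb{X}>1$), scale it by an increasing real function to obtain a WBV-function with $x^*\circ f\equiv 0$, and then read off $f_{x_{0,1}}\sim_{x_0}f_{x_{0,2}}$ directly from Definition~\ref{7}. The paper uses a general increasing $\gamma:[a,b]\to\mathbb{R}$ where you take $\gamma(t)=t$, and you add the (inessential but correct) remark that $f$ itself is not increasing in $(\mathbb{X},\mathbb{X}_{x_0}^+)$; otherwise the arguments coincide.
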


\begin{proof}
Let $x_0\in \mathbb{X}\setminus \lbrace 0\rbrace.$ Then $x^*(x_0)=\Vert x_0\Vert^2\neq 0$ implies that range$(x^*)=\mathbb{R}.$ Since $\dim{\mathbb{X}}>1,$ there exists $y\in X\setminus \lbrace 0\rbrace$ such that $x^*(y)=0.$ Fix an incresaing function $\gamma:[a,b]\to \mathbb{R}.$ Put $f(t)=\gamma(t)y$ for all $t\in [a,b].$ Consider a partition $P=\lbrace a=t_0<t_1<t_2<\cdots <t_{n-1}<t_n\rbrace$ of $[a,b].$ For a fix $z^*\in \mathbb{X}^*,$ we get that $\displaystyle \sum_{i=1}^n \vert z^*\circ f(t_i)-z^*\circ f(t_{i-1})\vert = \displaystyle \sum_{i=1}^n \vert \gamma(t_i)  z^*(y)-\gamma(t_{i-i})  z^*(y)\vert =\vert z^*(y)\vert \displaystyle \sum_{i=1}^n \vert \gamma(t_i)-\gamma(t_{i-i})\vert=\vert z^*(y)\vert \displaystyle \sum_{i=1}^n  \gamma(t_i)-\gamma(t_{i-i})=\vert z^*(y)\vert (\gamma(b)-\gamma(a)).$ The sum $\displaystyle \sum_{i=1}^n \vert z^*\circ f(t_i)-z^*\circ f(t_{i-1})\vert$  is bounded above by a fix number $\vert z^*(y)\vert (\gamma(b)-\gamma(a))$ for any partition $P=\lbrace a=t_0<t_1<t_2<\cdots <t_{n-1}<t_n\rbrace$ of $[a,b].$ Therefore by order completeness or Dedekind completeness property of real number system, we conclude that $z^*\circ f$ is of bounded variation and consequently $f$ is of weakly bounded variation. By theorem \ref{12}, there exists $f_{x_{0,1}}, f_{x_{0,2}}:[a,b]\to \mathbb{X}$ increasing functions in $(X,X_{x_0}^+)$ such that $f\sim_{x_0}(f_{x_{0,1}} - f_{x_{0,2}})~i.e.~x^*\circ (f-f_{x_{0,1}}+f_{x_{0,2}})=0.$ Then 
\begin{eqnarray*}
0 &=& x^*\circ (f-f_{x_{0,1}}+f_{x_{0,2}})(t) \\
&=& x^*(f(t)-x^*(f_{x_{0,1}}(t)-f_{x_{0,2}}(t))\\
&=& x^*(\gamma(t)y)-x^*(f_{x_{0,1}}(t)-f_{x_{0,2}}(t))\\
&=& \gamma(t) x^*(y)-x^*(f_{x_{0,1}}(t)-f_{x_{0,2}}(t))\\
&=&\gamma(t) 0-x^*(f_{x_{0,1}}(t)-f_{x_{0,2}}(t))\\
&=& -x^*(f_{x_{0,1}}(t)-f_{x_{0,2}}(t))
\end{eqnarray*}
so that $x^*(f_{x_{0,1}}(t)-f_{x_{0,2}}(t))=0$ for all $t\in [a,b].$ Therefore $x^*\circ(f_{x_{0,1}}-f_{x_{0,2}})=0~i.e.~f_{x_{0,1}}\sim_{x_0}f_{x_{0,2}}.$
\end{proof}


\begin{thebibliography}{100}

\bibitem{CA17} C. Aistleitner, On functions of bounded variation, Math. Proc. Camb. Phil. Soc., {\bf 162} (2017), 405--418.

\bibitem{MA71} E. M. Alfsen, Compact Convex sets and Boundary Integrals, Springer-Verlag, Berlin-Heidelberg-New York, 1971. 

\bibitem{MS90} L. Ambrosio, Metric space valued functions of bounded variation, Annali della Scuola Normale Superiore di Pisa, {\bf 17} (1990), 439--478.

\bibitem {CDOB} C. D. Aliprantis and O. Burkinshaw, Positive operators, Springer, 2006.

\bibitem{B06} B. Blackadar, Operator algebras. Theory of C$^*$-algebras and von Neumann algebras, Springer-Verlag, Berlin- Heidelberg-New York, 2006.	

\bibitem{TP03} G. Buskes and A. V. Rooij, Bounded Variation and Tensor Products of Banach Lattices, Positivity, {\bf 7} (2003) 47--59.

\bibitem {LSSF} L. Cesari, "Sulle funzioni a variazione limitata", Annali della Scuola Normale Superiore, {\bf 5} (1936), 299–313.

\bibitem {LCLD} L. Cesari, L'opera di Leonida Tonelli e la sua influenza nel pensiero scientifico del secolo, 1986.

\bibitem {JBCF} J. B. Conway, Functions of one complex variable, Springer-Verlag, Berlin-Heidelberg-New York, 1973. 

\bibitem{VR02} A. Cianchi and N. Fusco, Functions of Bounded Variation and Rearrangements, Arch. Rational Mech. Anal. {\bf 165} (2002) 1--40.

\bibitem{CS11} M. Duchon and C. Debieve, Functions with bounded variation in locally convex space, Tatra Mt. Math. Publ., {\bf 49} (2011), 89--98.

\bibitem{HT09} M. Duchon and Peter Malicky, A Helly Theorem for functions with values in metric spaces, Tatra Mt. Math. Publ. {\bf 44 } (2009), 159--168.

\bibitem{GM96} M. Duchon and B. Riecan, Generalized moment problem in vector lattices, Novi Sad J. Math, {\bf 26} (1996), 53--61.

\bibitem{GN} I. Gelfand and M. Neumark, On the imbedding of normed rings into the ring of operators in Hilbert space, Rec. Math. [Mat. Sbornik] N.S., {\bf 54} (1943), 197--213. 

\bibitem {J72} G. Jameson, Ordered linear spaces, Lecture Notes in mathematics, Springer-Verlag, Berlin-Heidelberg-New York, {\bf 141} (1970).

\bibitem {CJFS} C. Jordan, "Sur la série de Fourier" [On Fourier's series], Comptes rendus hebdomadaires des séances de l'Académie des sciences, {\bf 92} (1881), 228--230. 

\bibitem{RVK} R.\ V.\ Kadison, A representation theory for commutative topological algebra, Mem. Amer. Math. Soc., {\bf 7} (1951). 

\bibitem{Kad51} R. V. Kadison, Order properties of bounded self-adjoint operators, Proc. Amer. Math. Soc., {\bf 2} (1951), 505--510. 

\bibitem{RJ83} R. V. Kadison and J. R. Ringrose, Fundamentals of the theory of operator algebras, Academic Press, Inc., London-New York,  1983. 

\bibitem{Kak} S. Kakutani, Concrete representation of abstract $M$-spaces, Ann. of Math., {\bf 42} (1941), 994--1024.

\bibitem{K10} A. K. Karn, A p-theory of ordered normed spaces, Positivity, {\bf 14} (2010), 441--458. 

\bibitem{K14} A. K. Karn, Orthogonality in $l_p$-spaces and its bearing on ordered Banach spaces, Positivity, {\bf 18} (2014), 223--234. 

\bibitem{K16} A. K. Karn, Orthogonality in C$^*$-algebras, Positivity, {\bf 20} (2016), 607--620.

\bibitem{K18} A. K. Karn, Algebraic orthogonality and commuting projections in operator algebras, Acta Sci. Math. (Szeged), {\bf 84} (2018), 323--353. 

\bibitem{K19} A. K. Karn and A. Kumar, Isometries of absolute order unit spaces, Positivity, {\bf 24} (2020), 1263--1277.

\bibitem{PI19} A. K. Karn and A. Kumar, Partial isometries in an absolute order unit space, Banach J. Math. Anal., {\bf 15} (2021), 1--26.

\bibitem{VV90} K. Kitahara, On the space of vector-valued functions of bounded Variation, The Rocky Mountain Journal of Mathematics, {\bf 20} (1990), 165--171.

\bibitem{BV71} J. Kist and P. H. Maserick, BV-Functions on semilattices, Pacific Journal of Mathematics, {\bf 37} (1971). 

\bibitem{IR19} Z. Lipcsey, I. M. Esuabana, J. A. Ugboh, I. O. Isaac, Integral Representation of Functions of Bounded Variation, Hindawi Journal of Mathematics, https://doi.org/10.1155/2019/1065946.

\bibitem{MF73} P. H. Maserick, Moment and BV-Functions on Commutative Semigroups, Trans. Amer. Math. Soc., {\bf 181} (1973), 61-75.

\bibitem{VS17} F. J. Mendoza-Torres, J. A. Escamilla-Reyna and D. Rodriguez-Tzompantzi, The Jordan decomposition of bounded variation functions valued in vecto spaces, AIMS Mathematics, {\bf 2} (2017), 635--646.

\bibitem{GKP} G. K. Pedersen, C$^*$-algebras and their automorphism groups, Academic Press, Inc., London-New York, 1979.

\bibitem{HHS74} H. H. Schaefer, Banach lattices and positive operators, Springer-Verlag, Berlin-Heidelberg-New York, 1974.
	
\bibitem {RLWA} R. L. Wheeden and A. Zygmund, Measure and Integral: An Introduction to Real Analysis, CRC Pure and Applied Mathematics, 1977.	
	
\bibitem {WN73} Y. C. Wong and K. F. Ng, Partially ordered topological vector spaces, Oxford Mathematical Monographs, Clarendon Press, Oxford, 1973.	
 
\end{thebibliography}
\end{document}